\let\oldFootnote\footnote
\newcommand\nextToken\relax
\renewcommand\footnote[1]{%
    \oldFootnote{#1}\futurelet\nextToken\isFootnote}
\newcommand\isFootnote{%
    \ifx\footnote\nextToken\textsuperscript{,}\fi}
\newtheorem{definition}{Definition}
\newtheorem{assumption}{Assumption}
\newtheorem{proposition}{Proposition}
\newtheorem{theorem}{Theorem}
\newtheorem{proof}{Proof}
\newtheorem{remark}{Remark}
\begin{document}
\begin{frontmatter}

\title{Price of anarchy in electric vehicle charging control games: When Nash equilibria achieve social welfare\thanksref{footnoteinfo}}

\thanks[footnoteinfo]{Research was supported by the European Commission, H2020, under the project UnCoVerCPS, grant number 643921, by EPSRC UK under the
grant EP/P03277X/1, and by a MathWorks professorship support. Preliminary results related to Sections 3.2 and 3.3 of the current manuscript can be found in \cite{Deori_ifac}. Corresponding author: Luca Deori.}

\author[DEIB]{Luca Deori}\ead{luca.deori@polimi.it},
\author[OX]{Kostas Margellos}\ead{kostas.margellos@eng.ox.ac.uk},
\author[DEIB]{Maria Prandini}\ead{maria.prandini@polimi.it}

\thanks[DEIB]{Dipartimento di Elettronica, Informazione e Bioingegneria, Politecnico di Milano, Piazza Leonardo da Vinci 32, 20113 Milano, Italy.}

\thanks[OX]{Department of Engineering Science, University of Oxford,
Parks Road, OX1 3PJ, Oxford, UK}

\begin{keyword}
Price of anarchy, mean field games, electric vehicles, optimal charging control, fixed-point theorems.
\end{keyword}

\begin{abstract}
We consider the problem of optimal charging of plug-in electric vehicles (PEVs). We treat this problem as a multi-agent game, where vehicles/agents are heterogeneous since they are subject to possibly different constraints. Under the assumption that electricity price is affine in total demand, we show that, for any finite number of heterogeneous agents, the PEV charging control game admits a unique Nash equilibrium, which is the optimizer of an auxiliary minimization program.
We are also able to quantify the asymptotic behaviour of the \emph{price of anarchy} for this class of games. More precisely, we prove that if the parameters defining the constraints of each vehicle are drawn randomly from a given distribution, then,  the value of the game converges almost surely to the optimum of the cooperative problem counterpart as the number of agents tends to infinity.
In the case of  a discrete probability distribution, we provide a systematic way to abstract agents in homogeneous groups and show that, as the number of agents tends to infinity, the value of the game tends to a deterministic quantity.
\end{abstract}

\end{frontmatter}

\section{Introduction} \label{sec:secI}
Electric vehicles obtain some or all of their energy from the electricity grid, and are typically referred to as plug-in electric vehicles (PEVs). Their penetration is expected to increase significantly, since, not only they contribute to pollution reduction, but, by charging over low electricity price periods, they also serve as virtual dynamic storage, contributing to the stability of the electric grid (see \cite{Rahman_Shrestha_1993,Denholm_Short_2006,Callaway_Hiskens_2011,Zhang_2014}).
In an electric vehicle charging control context two cases can be distinguished. The first case refers to a set-up where vehicles are social welfare maximizing entities and cooperate in view of minimizing the overall population cost. Under this setting, \cite{Gan_etal_2013,Deori_etal_2016a,Deori_etal_2016b} propose iterative schemes that involve every vehicle solving a local minimization program, and show convergence to the social welfare optimum.
In the second case vehicles act as selfish agents that seek to minimize their local cost, without being concerned with social welfare paradigms. This gives rise to multi-agent non-cooperative games, and the main concern is the computation of Nash equilibrium strategies.
A complete theoretical analysis is provided in \cite{Caines_etal_2007,Lions_etal_2007} for stochastic continuous-time problems, but in the absense of constraints. The deterministic, discrete-time problem variant, was investigated in
\cite{Hiskens_2013}, and was further extended in \cite{Franc_2014,Grammatico_etal_2015} to account for the presence of constraints.
However, for any finite number of agents, an approximate Nash equilibrium is computed, while the exact Nash one is reached only in the limiting case where the number of agents tends to infinity. The recent work of \cite{Paccagnan_etal_2016} overcomes this issue under the assumption that vehicles are aware of the way the total population consumption affects the price that drives their behaviour.

One challenge associated with the aforementioned stream of literature is that there is no common awareness on how the resulting Nash equilibrium solution is related to the associated social welfare optimum.
In this paper we follow a pricing set-up similar to the seminal paper by \cite{Arrow_Debreu_1954}, and account for constraint heterogeneity by assuming that the parameters defining the constraints of each vehicle are drawn randomly from a given distribution. We consider a multi-stage variant of the problem, however,  we assume the price is an affine function of the total consumption.
Under this set-up, our paper provides the following contributions: 

(1) We quantify, to the best of our knowledge for the first time, the limiting value of the \emph{price of anarchy} \cite{Koutsoupias_Papadimitriou_1999} for this class of games. The price of anarchy provides the means to quantify the efficiency of Nash equilibria, and is defined as the ratio between the worst-case value of the game achieved by a Nash equilibrium (in our setting there is a unique one) and the social optimum. We prove that as the number of agents tends to infinity this ratio tends to one for almost any choice of the random heterogeneity parameters (Theorem \ref{thm:limit_Nash_opt}). This result extends \cite{Hiskens_2013} to the case of heterogeneous agents that are subject to constraints, without resorting to approximate Nash equilibria and primal-dual algorithms as in \cite{Li_Zhang_2016}.
As a byproduct we show that, for any finite number of possibly heterogeneous agents, the PEV charging control game admits a unique Nash equilibrium, which is the minimizer of an auxiliary minimization program (Proposition \ref{prop:Nash_opt}). This is due to the fact that the underlying game is potential \cite{facchinei2011PG}, however, our proof line is different and is based on fixed-point theoretic results. This result opens the road for the use of iterative algorithms for decentralized computation of Nash equilibria \cite{Gan_etal_2013,Paccagnan_etal_2016,Deori_etal_2016b}.

(2) We provide the discrete time counterpart of the mean-field game theoretic approach in \cite{Caines_etal_2007}, treating heterogeneity in a probabilistic manner, thus complementing the deterministic approaches of \cite{Grammatico_etal_2015,Paccagnan_etal_2016,Li_Zhang_2016}.
In particular, we show that if the distribution of the random parameters that render agents' constraints heterogeneous is discrete, agents can be abstracted in homogeneous groups and, for almost any realization of the random heterogeneity parameters, as the number of agents tends to infinity, the value of the game tends to a deterministic quantity (Theorem \ref{thm:heter}).

It should be noted that our set-up exhibits similarities with multi-participant market investigations in \cite{Caramanis_2011,Caramanis_2012,Dahleh_2015,Caramanis_IEEE}. In particular, it is shown in \cite{Caramanis_2012} that under current day-ahead operations participants have the incentive to self-dispatch, and the resulting social welfare market clearing prices are not practically viable. This is not in contrast with our results, since we show that Nash equilibria and social optima tend to coincide only in the limiting case of an infinite number of agents, and may differ for finite populations. Moreover, we consider a stylized architecture without including a distribution network model.

Section \ref{sec:secII} introduces the non-cooperative PEV charging control game and its social welfare counterpart. Section \ref{sec:secIII} quantifies the price of anarchy for the limiting case of an infinite number of agents. In Section \ref{sec:secIV}, we investigate the effect heterogeneity has in the value of the game, while Section \ref{sec:secVI} provides some directions for future work.

\section{Electric vehicle charging control problem} \label{sec:secII}
\subsection{Cooperative set-up} \label{sec:secIIA}
{We first consider the case of $m$ PEVs that seek to determine their charging profile along some discrete time horizon $[0,h-1]$ of arbitrary  length $h \in \mathbb{N}$ so as to minimize the total charging cost for the entire fleet. This corresponds to a cooperative set-up that is likely to occur when vehicles belong to the same managing entity.
To this end, let $H = \{0,1,\ldots,h-1\}$ and $I = \{1,\ldots,m\}$}. 
Consider the following optimization program:
\begin{align}
\min_{\{ x^{it} \in \mathbb{R} \}_{\substack{t \in H\\i \in I }}}
&\sum_{t \in H} p^t \Big ( \sum_{i \in I} x^{it} + x^{0t} \Big )^2  \label{eq:prob_SO_obj}\\
\text{subject to:~ } &\sum_{t \in H} x^{it} = \gamma^i, \text{ for all } i \in I, \label{eq:prob_SO_gamma} \\
& x^{it} \in [\underline{x}^{it}, \overline{x}^{it}], \text{ for all } t \in H,~ i \in I, \label{eq:prob_SO_limits}
\end{align}
where $x^{it} \in \mathbb{R}$ is the charging rate of vehicle $i$, $i \in I$, at time $t$, $t \in H$, and $p^t \geq 0$ is an electricity price coefficient at time $t$.
For each $t \in H$, we denote by $x^{0t} \geq 0$ the non-PEV demand which, for a fixed number of PEVs $m$, is treated as constant and not as an optimization variable in the optimization programs below. Similarly to \cite{Hiskens_2013,Franc_2014},  for all $t \in H$, we assume that $\lim_{m \to \infty} x^{0t}/m = \hat{x}^{0t}$ is constant, allowing the non-PEV demand to grow linearly in the number of agents $m$ if $\hat{x}^{0t} \neq 0$.

The price of electricity is given by $p^t (\sum_{i \in I} x^{it} + x^{0t})$, and is assumed to depend linearly on the total PEV and non-PEV demand through $p^t$. Dependency of price on the PEV demand is affine due the presence of $x^{0t}$.
Our choice for an affine price function is a simplification over
\cite{Arrow_Debreu_1954,Gan_etal_2013,Hiskens_2013} where convex monotone increasing functions are allowed, and is motivated by \cite{Grammatico_etal_2015}, where an affine function is also employed, as well as by the numerical investigations of  \cite{Gharesifard_etal_2016} (in the corresponding theoretical analysis more general functions are allowed). The slope of this function encodes the inverse of the price elasticity of demand, and is motivated by the fact that marginal prices in lossless unconstrained energy systems are affine functions of the total production/demand \cite{Caramanis_IEEE}.
The objective function in \eqref{eq:prob_SO_obj} encodes the total electricity cost over $[0,h-1]$.   Constraint \eqref{eq:prob_SO_gamma} represents a prescribed charging level $\gamma^i \in \mathbb{R}$, $\gamma^i > 0$, to be reached by each vehicle $i$ at the end of the considered time horizon $H$, whereas \eqref{eq:prob_SO_limits} imposes minimum ($\underline{x}^{it} \in \mathbb{R}$, $\underline{x}^{it} \geq 0$) and maximum ($\overline{x}^{it} \in \mathbb{R}$, $\overline{x}^{it} < \infty$) limits, respectively, on $x^{it}$.

For all $i \in I$, let $x^i = [x^{i0}, \ldots, x^{i(h-1)}]^\top \in \mathbb{R}^{|H|}$, where $|\cdot|$ denotes the cardinality of its argument. Let also
$f: \mathbb{R}^{|H|} \times \mathbb{R}^{m|H|} \to \mathbb{R}$ be such that, for all $i \in I$, for any $(x^i,x^{-i}) \in \mathbb{R}^{m|H|}$,
\begin{align}
f(x^i,x^{-i}) = \sum_{t \in H} x^{it} p^t \Big ( \sum_{\substack{j \in I\\ j \neq i}} x^{jt} +x^{it} +  x^{0t} \Big ), \label{eq:agent_payoff}
\end{align}
where by $x^{-i} \in \mathbb{R}^{(m-1)|H|}$ we imply a vector including the decision variables of all vehicles except vehicle $i$ (recall that $x^{0t}$ is constant for any fixed $m$ and hence not included in these vectors).
Moreover, for all $i \in I$, let
\begin{align}
X^i = \big \{ x^i \in \mathbb{R}^{|H|}:~ & \sum_{t \in H} x^{it} = \gamma^i \text{ and } \nonumber \\
&x^{it} \in [\underline{x}^{it}, \overline{x}^{it}], \text{ for all } t \in H \big \}, \label{eq:agent_con}
\end{align}
denote the constraint set corresponding to vehicle $i$.
Let $x = (x^1,\ldots,x^m)$ and $X = X^1 \times \ldots \times X^m$, and consider $f_0: \mathbb{R}^{m|H|} \to \mathbb{R}$ such that
$f_0(x) = \sum_{t \in H} x^{0t} p^t ( \sum_{j \in I} x^{jt}  + x^{0t} )$,
which represents the cost of non-PEV demand.
{We can then rewrite \eqref{eq:prob_SO_obj}-\eqref{eq:prob_SO_limits} as }
\begin{align}
\mathcal{P}:~ \min_{\{x^i \in X^i\}_{i \in I}} f_0 (x) + \sum_{i \in I} f(x^i,x^{-i}). \label{eq:opt_P}
\end{align}
and refer to its optimal solution as social optimum.
Note that local utility functions that depend only on the decision vector $x^i$ of each vehicle $i$, $i \in I$, and are possibly different per vehicle, can be incorporated in $\mathcal{P}$ by means of an epigraphic reformulation (see \cite{Deori_etal_2016a}).

\begin{assumption} \label{ass:feas}
{Fix any $m \geq 1$ and let $\gamma^i > 0$, $i \in I$. \\
a) The sets $X^i$, $i \in I$, are nonempty and compact. \\
b) The price coefficient satisfies $p^t > 0$, for all $t \in H$.}
\end{assumption}
The second part of Assumption \ref{ass:feas} is only needed for the proof of Theorem \ref{thm:limit_Nash_opt}, but is naturally satisfied in situations of practical relevance.

Denote the set of social optima $M$ of $\mathcal{P}$ by
\begin{align}
M = \arg \min_{ \{x^i \in X^i\}_{i \in I}} f_0(x) + \sum_{i \in I} f(x^{i},x^{-i}). \label{eq:def_min}
\end{align}
Note that \eqref{eq:def_min} involves minimizing a continuous function (as an effect of being convex), over a compact set (which is convex) due to Assumption \ref{ass:feas}. As such, the minimum is achieved due to Weierstrass' theorem in \cite[Proposition A.8, p. 625]{Bertsekas_Tsitsiklis_1997}. Under a similar reasoning all subsequent minimization problems are well defined.  It should be emphasized that $f_0$ is introduced to facilitate the compact representation of \eqref{eq:prob_SO_obj} in \eqref{eq:opt_P} and captures the cost of non-PEV demand, which does not appear in the gaming formulation of the next subsection where, similarly to \cite{Hiskens_2013,Franc_2014}, agents' pay-off functions are given by \eqref{eq:agent_payoff}.

\subsection{Non-cooperative set-up} \label{sec:secIIB}
We now consider the case where the $m$ vehicles act in a non-cooperative manner. In particular, each vehicle/agent $i$, $i \in I$, aims at determining a charging profile $x^i$ that minimizes its pay-off function $f(x^i,x^{-i})$, as this is given by \eqref{eq:agent_payoff}, which depends on its own decision vector $x^i$ and on the other agents decision vector $x^{-i}$, subject to a local constraint $x^i \in X^i$. We say that for all $i$, $i \in I$, the tuple $(x^i,x^{-i})$ is a Nash equilibrium of the game, if each agent $i$, given the strategies $x^{-i}$ of the other agents, has no interest in changing its own strategy $x^i$.

\begin{definition}
For all $i \in I$, each agent $i$ has a pay-off function $f(\cdot,x^{-i})$
and a constraint set $X^i$. The set of Nash equilibria $N$ of the non-cooperative game is given by
\begin{align}
N = \big\{x\in X:\ f(x^{i},x^{-i}) &\le f(\zeta^i,x^{-i}) \nonumber \\ &\text{for all } \zeta^i \in X^i,~ i \in I \big\}, \label{eq:def_Nash}
\end{align}
{where $x = (x^1,\ldots,x^m)$ and $X = X^1 \times \ldots \times X^m$.}
\end{definition}
Since each agent has a pay-off function of the same structure, the resulting game is a potential game \cite{facchinei2011PG,Voorneveld2000PG}.

\section{Nash equilibria versus social optima} \label{sec:secIII}
\subsection{Nash equilibria as fixed-points} \label{sec:secIIIA}
The results of this subsection {do not require the pay-off function to exhibit the form of \eqref{eq:agent_payoff} and are more general;} in fact each agent could have a different pay-off function, convex with respect to the decision vector of the particular agent, but possibly non-differentiable.

For each $i$, $i \in I$, consider the mappings $T^i:X \to X^i$ and $\widetilde{T}^i: X \to X^i$, defined such that, for any $x \in X$,
\begin{align}
T^{i}(x)= &\arg\min_{z^i\in X^i}  \|z^i -x^i\|^2 \label{eq:T_i} \\
&\text{subject to } \nonumber \\
&f(z^i,x^{-i}) \le \min_{\zeta^i \in X^i} f(\zeta^i,x^{-i}),  \nonumber \\
\widetilde{T}^i(x) = &\arg\min_{z^i \in X^i} f(z^i,x^{-i}) + c \|z^i - x^i\|^2, \label{eq:Ttilde_i}
\end{align}
for any $c>0$.
Note that both mappings are well defined since both the minimizers of \eqref{eq:T_i} and \eqref{eq:Ttilde_i} are unique. As for the mapping in \eqref{eq:T_i}, a tie-break rule is implemented to select, in case $f(\cdot,x^{-i})$ admits multiple minimizers over $X^i$, the one closer to $x^i$ with respect to the Euclidean norm. In contrast, the mapping $\widetilde{T}^i$ in \eqref{eq:Ttilde_i} includes in the objective function an additional term weighted by $c > 0$, which penalizes the deviations from the current decision vector $x^i$ and makes it strictly convex.
Notice that, with a slight abuse of notation, by $T^{i}(x)$ and $\widetilde{T}^i(x)$, we imply the minimizers of \eqref{eq:T_i} and \eqref{eq:Ttilde_i}, respectively, and not the corresponding (singleton due to uniqueness) sets.

Define also the mappings $T: X \to X$ and $\widetilde{T}: X \to X$, such that their components are given by $T^i$ and $\widetilde{T}^i$, respectively, for $i \in I$, i.e., $T = ( T^1,\ldots,T^m )$ and $\widetilde{T} = ( \widetilde{T}^1,\ldots,\widetilde{T}^m )$. They can be equivalently written as
\begin{align}
T(x) = &\arg \min_{z \in X} \sum_{i \in I} \|z^i - x^i\|^2\ \label{eq:T} \\
&\text{subject to } \nonumber \\
&f(z^i,x^{-i}) \le \min_{\zeta^i \in X^i} f(\zeta^i,x^{-i}), ~\forall i \in I, \nonumber\\
\widetilde{T}(x) = &\arg\min_{z \in X} \sum_{i \in I} \big [ f(z^i,x^{- i}) + c\|z^i-x^i\|^2 \big ].\label{eq:Ttilde}
\end{align}
The set of fixed points for $T$ and $\widetilde{T}$ is given by
\begin{align}
F_T &= \big \{ x \in X:~ x=T(x)\big \}\label{eq:FP_T}, \\
F_{\widetilde{T}} &= \big \{ x \in X:~ x=\widetilde{T}(x)\big \}.\label{eq:FP_Ttilde}
\end{align}

We first show that the set of Nash equilibria $N$ and the set of fixed-points $F_T$ of the mapping $T$ in \eqref{eq:T} coincide.
\begin{proposition}\label{prop:N_FT}
Under Assumption \ref{ass:feas}.a), $N = F_T$.
\end{proposition}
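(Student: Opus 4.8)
The plan is to establish the two inclusions $N\subseteq F_T$ and $F_T\subseteq N$ separately. Because $X=X^1\times\cdots\times X^m$ and both the objective and the constraints in \eqref{eq:T} decouple across the agents, $T(x)=x$ is equivalent to $T^i(x)=x^i$ for every $i\in I$, so in each inclusion I would argue componentwise and the whole proof reduces to unwinding the definitions of a Nash equilibrium and of the map $T^i$.

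For $N\subseteq F_T$, I would fix $x\in N$ and $i\in I$. By \eqref{eq:def_Nash}, $f(x^i,x^{-i})\le f(\zeta^i,x^{-i})$ for every $\zeta^i\in X^i$, so $x^i$ attains $\min_{\zeta^i\in X^i}f(\zeta^i,x^{-i})$; hence $x^i$ is feasible for the program \eqref{eq:T_i} defining $T^i(x)$, and it drives the objective $\|z^i-x^i\|^2$ down to its smallest possible value $0$. Invoking the uniqueness of the minimizer of \eqref{eq:T_i} (which incorporates the tie-break rule), this forces $T^i(x)=x^i$. Since $i$ is arbitrary, $T(x)=x$, i.e. $x\in F_T$.

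For $F_T\subseteq N$, I would fix $x\in F_T$, so that $T^i(x)=x^i$ for all $i\in I$. By construction $T^i(x)$ is feasible for \eqref{eq:T_i}, hence $x^i=T^i(x)$ satisfies $f(x^i,x^{-i})\le\min_{\zeta^i\in X^i}f(\zeta^i,x^{-i})\le f(\zeta^i,x^{-i})$ for every $\zeta^i\in X^i$. As this holds for each $i\in I$, the tuple $x$ meets the defining condition \eqref{eq:def_Nash}, so $x\in N$, which completes the argument.

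I do not expect a genuine obstacle here; the statement is essentially a definitional equivalence. The only points requiring care are that the feasible set of \eqref{eq:T_i} is nonempty --- which follows from Assumption \ref{ass:feas}.a) together with the continuity of the convex map $f(\cdot,x^{-i})$, so that $\min_{\zeta^i\in X^i}f(\zeta^i,x^{-i})$ is attained by Weierstrass' theorem --- and that in the forward inclusion one must appeal to the already-established uniqueness of $T^i(x)$ in order to upgrade ``$x^i$ is a minimizer of $f(\cdot,x^{-i})$'' to ``$x^i$ equals the selected minimizer $T^i(x)$'', rather than merely being one of possibly several.
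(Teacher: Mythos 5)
Your proposal is correct and follows essentially the same route as the paper's own proof: both directions are obtained by unwinding the definitions, observing in one direction that a Nash point is feasible for \eqref{eq:T_i} and achieves the minimal objective value $0$, and in the other that feasibility of $x^i=T^i(x)$ for \eqref{eq:T_i} directly yields the Nash inequality. Your added remarks on the componentwise decomposition, the uniqueness of the selected minimizer, and attainment of $\min_{\zeta^i\in X^i}f(\zeta^i,x^{-i})$ via Weierstrass' theorem are consistent with, and slightly more explicit than, the paper's argument.
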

\begin{proof}
1) $N \subseteq F_T$: Fix any $x \in N$. For each $i \in I$, denote $x$ by $(x^i,x^{-i})$. The fact that $x \in N$ implies that $x^i$ is a minimizer of $f(\cdot,x^{-i})$, for all $i \in I$, indeed according to \eqref{eq:def_Nash}, $f(x^i,x^{-i})$ will be no greater than the values that $f$ may take if evaluated at $(\zeta^i,x^{-i})$, for any $\zeta^i \in X^i$, i.e., $f(x^i,x^{-i}) \leq f(\zeta^i,x^{-i})$, for all $\zeta^i \in X^i$.
The last statement can be equivalently written as $f(x^i,x^{-i}) \leq \min_{\zeta^i \in X^i} f(\zeta^i,x^{-i})$ which means that $x$ satisfies the inequality in \eqref{eq:T}. Moreover, $x$ is also optimal for the objective function in \eqref{eq:T}, since it results in zero cost. Hence, $x=T(x)$, which by \eqref{eq:FP_T} implies that $x \in F_T$.\\
2) $F_T \subseteq N$: Fix any $x \in F_T$. By the definition of $F_T$, and due to the inequality in \eqref{eq:T} that is embedded in the definition of $T$, we have that for all $i \in I$, $f(x^i,x^{-i}) \leq \min_{\zeta^i \in X^i} f(\zeta^i,x^{-i})$. The last statement implies that $x^i$ is a minimizer of $f(\cdot,x^{-i})$ over $X^i$, and hence $f(x^{i},x^{-i}) \le f(\zeta^i,x^{-i})$, $\forall \zeta^i \in X^i$, $\forall i \in I$, which due to \eqref{eq:def_Nash} implies that $x\in N$.
\hfill $\Box$
\end{proof}

We next show that the set of fixed-points $F_T$ of $T$ in \eqref{eq:FP_T} and the set of fixed-points $F_{\widetilde{T}}$ of $\widetilde{T}$ in \eqref{eq:FP_Ttilde} coincide.

\begin{proposition}\label{prop:FT_FTilde}
Under Assumption \ref{ass:feas}.a), $F_T = F_{\widetilde{T}}$.
\end{proposition}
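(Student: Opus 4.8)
The plan is to establish the two inclusions separately, relying in both directions on the characterization, already implicit in the proof of Proposition~\ref{prop:N_FT}, that $x\in F_T$ if and only if, for every $i\in I$, $x^i$ is a minimizer of $f(\cdot,x^{-i})$ over $X^i$. Indeed, if $x=T(x)$ then $x^i=T^i(x)$ lies in the feasible region of \eqref{eq:T_i}, so $f(x^i,x^{-i})\le\min_{\zeta^i\in X^i}f(\zeta^i,x^{-i})$, i.e.\ $x^i$ is a minimizer of $f(\cdot,x^{-i})$; conversely, if $x^i$ is such a minimizer then it is feasible for \eqref{eq:T_i} and attains the zero lower bound of the objective there, so by the tie-break rule $T^i(x)=x^i$. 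Throughout we use that each $X^i$ is convex and compact (it is the intersection of a hyperplane and a box, cf.~\eqref{eq:agent_con}, and nonempty compact by Assumption~\ref{ass:feas}.a) and that $f(\cdot,x^{-i})$ is convex, as stated at the beginning of this subsection.

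First I would show $F_T\subseteq F_{\widetilde{T}}$. Fix $x\in F_T$ and $i\in I$. Since $x^i$ minimizes $f(\cdot,x^{-i})$ over $X^i$, for every $z^i\in X^i$ we have $f(z^i,x^{-i})\ge f(x^i,x^{-i})$, while $c\|z^i-x^i\|^2\ge 0=c\|x^i-x^i\|^2$. Adding these inequalities shows that $x^i$ minimizes the (strictly convex) objective in \eqref{eq:Ttilde_i}; by uniqueness of that minimizer, $\widetilde{T}^i(x)=x^i$. As $i\in I$ was arbitrary, $\widetilde{T}(x)=x$, i.e.\ $x\in F_{\widetilde{T}}$.

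The converse inclusion $F_{\widetilde{T}}\subseteq F_T$ is the substantive part. Fix $x\in F_{\widetilde{T}}$ and $i\in I$; by the characterization above it suffices to show that $x^i$ minimizes $f(\cdot,x^{-i})$ over $X^i$. Suppose not: there exists $y^i\in X^i$ with $\delta:=f(x^i,x^{-i})-f(y^i,x^{-i})>0$ (hence $y^i\neq x^i$). For $\lambda\in(0,1]$ set $z^i_\lambda=x^i+\lambda(y^i-x^i)\in X^i$, which is well defined by convexity of $X^i$. Convexity of $f(\cdot,x^{-i})$ yields $f(z^i_\lambda,x^{-i})\le f(x^i,x^{-i})-\lambda\delta$, so the objective in \eqref{eq:Ttilde_i} evaluated at $z^i_\lambda$ is at most $f(x^i,x^{-i})-\lambda\delta+c\lambda^2\|y^i-x^i\|^2$. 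Choosing $\lambda>0$ small enough (e.g.\ $\lambda<\delta/(c\|y^i-x^i\|^2)$) makes this strictly smaller than $f(x^i,x^{-i})$, which is the value of the same objective at $z^i=x^i$, contradicting $\widetilde{T}^i(x)=x^i$. Hence $x^i$ minimizes $f(\cdot,x^{-i})$ over $X^i$ for every $i\in I$, so $x\in F_T$, and the two sets coincide.

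I expect the main obstacle to be this last perturbation argument: because $f(\cdot,x^{-i})$ is allowed to be non-differentiable, one cannot argue through first-order optimality conditions, and the point is to use convexity of both $f(\cdot,x^{-i})$ and $X^i$ to move along the segment towards $y^i$ and observe that the first-order (in $\lambda$) decrease of $f$ dominates the second-order penalty $c\lambda^2\|y^i-x^i\|^2$ for small $\lambda$. The remaining ingredients — well-posedness and uniqueness of the minimizers in \eqref{eq:T_i} and \eqref{eq:Ttilde_i}, already noted after their definitions — are routine.
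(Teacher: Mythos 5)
Your proposal is correct and follows essentially the same route as the paper's proof: the forward inclusion by adding the nonnegative proximal term to the minimality of $x^i$ for $f(\cdot,x^{-i})$, and the reverse inclusion by the same contradiction argument moving along the segment $x^i+\lambda(y^i-x^i)$ and observing that the linear-in-$\lambda$ decrease of $f$ beats the quadratic penalty $c\lambda^2\|y^i-x^i\|^2$ for small $\lambda$. The only cosmetic difference is that you make the equivalence $x\in F_T \Leftrightarrow x^i$ minimizes $f(\cdot,x^{-i})$ for all $i$ explicit up front, whereas the paper re-derives it inline in each direction.
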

\begin{proof}
1) $F_T \subseteq F_{\widetilde{T}}$: Fix any $x \in F_T$. By the definition of $F_T$, and due to the inequality in \eqref{eq:T} that is embedded in the definition of $T$, we have that for all $i \in I$, $f(x^i,x^{-i}) \leq \min_{\zeta^i \in X^i} f(\zeta^i,x^{-i})$. The last statement implies that $x^i$ is a minimizer of $f(\cdot,x^{-i})$ over $X^i$, and hence $f(x^{i},x^{-i}) \le f(\zeta^i,x^{-i})$, $\forall \zeta^i \in X^i$, $\forall i \in I$. Therefore, we would also have that $f(x^{i},x^{-i}) \le f(\zeta^i,x^{-i}) + c \| \zeta_i - x_i \|^2$, $\forall \zeta^i \in X^i$, $\forall i \in I$. The latter, due to \eqref{eq:Ttilde_i} implies that $x^i = T^i(x)$, for all $i \in I$, and hence $x \in F_{\widetilde{T}}$.

2) $F_{\widetilde{T}} \subseteq F_T$: Fix any $x \in F_{\widetilde{T}}$. By the definition of $F_{\widetilde{T}}$, and due to \eqref{eq:Ttilde_i}, the latter implies that $x^i = \widetilde{T}^i(x)$ for all $i \in I$. We thus have that, for all $i \in I$,
\begin{align}
f(x^{i},x^{-i}) \leq f(\zeta^i,x^{-i}) +c\|\zeta^i&-x^i\|^2, ~\forall \zeta^i \in X^i. \label{eq:proof_FT1}
\end{align}
If in addition $x^i$ minimizes $f(\cdot,x^{-i})$ over $X^i$, for all $i \in I$, then $x^i$ would satisfy the inequality in \eqref{eq:T_i}, while resulting in zero cost. We would thus have that $x^i = T^i(x)$, for all $i \in I$, and hence $x \in F_T$.

To show that, for all $i \in I$, $x^i$ minimizes $f(\cdot,x^{-i})$ over $X^i$, assume for the sake of contradiction that this is not the case and there exists
$z^i \in X^i$, $z^i \neq x^i$, such that $f(z^i,x^{-i}) < f(x^i,x^{-i})$.
For any $\alpha \in (0,1)$, let $\zeta^i=\alpha z^i +(1-\alpha)x^i$. Note that by convexity of $X^{i}$, $\zeta^i \in X^i$, whereas by convexity of $f(\cdot,x^{-i})$ with respect to its first argument we have that
\begin{align}
f(\zeta^i,x^{-i}) \le \alpha f(z^i,x^{-i}) + (1-\alpha)f(x^{i},x^{-i}), \label{eq:proof_FT2}
\end{align}
which, by rearranging some terms, can be rewritten as
\begin{align}
f(\zeta^i,x^{-i}) +\alpha \big ( f(x^{i},x^{-i})-f(&z^i,x^{-i}) \big ) \nonumber \\
&\le f(x^{i},x^{-i}). \label{eq:proof_FT3}
\end{align}

Note that, since $f(x^{i},x^{-i})-f(z^i,x^{-i}) >0$, $\|z^i-x^i\| >0$ and $c>0$, there exists $\alpha \in (0,1)$ such that
\begin{align}
\alpha (f(x^{i},x^{-i})-f(z^i,x^{-i})) &> c\alpha^2\|z^i-x^i\|^2 \nonumber \\&=c\|\zeta^i-x^i\|^2, \label{eq:proof_FT4}
\end{align}
where the equality follows from the definition of $\zeta^i$ (note that $\zeta^i$ depends on the choice of $\alpha$). By \eqref{eq:proof_FT3} and \eqref{eq:proof_FT4} we have that there exists $\alpha$ such that
\begin{align}
f(\zeta^i,x^{-i}) + c\|\zeta^i-x^i\|^2 < f(x^{i},x^{-i}). \label{eq:proof_FT5}
\end{align}
The last statement, together with \eqref{eq:proof_FT1}, leads to a contradiction, showing that $x^i$ minimizes $f(\cdot,x^{-i})$ over $X^i$. \hfill $\Box$
\end{proof}

An alternative proof for a result similar to Proposition \ref{prop:FT_FTilde} was provided in \cite[Proposition 3]{Deori_etal_2016b}, relying, however, on the additional assumption that the objective functions involved are differentiable.
The following corollary is a direct consequence of Propositions \ref{prop:N_FT} and \ref{prop:FT_FTilde}.
\begin{cor}\label{cor:N_FTilde}
Under Assumption \ref{ass:feas}.a), $N=  F_{\widetilde{T}}$.
\end{cor}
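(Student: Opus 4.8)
The plan is simply to chain the two equalities already proved. Under Assumption~\ref{ass:feas}.a), Proposition~\ref{prop:N_FT} gives $N = F_T$, and Proposition~\ref{prop:FT_FTilde} gives $F_T = F_{\widetilde{T}}$; transitivity of set equality then yields $N = F_{\widetilde{T}}$. Concretely, I would write out the two chains of inclusions $N \subseteq F_T \subseteq F_{\widetilde{T}}$ and $F_{\widetilde{T}} \subseteq F_T \subseteq N$, each step being a direct citation of one of the two propositions, and conclude.

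Since both propositions are stated under exactly the hypothesis Assumption~\ref{ass:feas}.a), no extra assumption is needed, and there is honestly no obstacle here: all the substantive content — the tie-break argument identifying Nash equilibria with fixed points of $T$, and the convexity-plus-contradiction argument identifying $F_T$ with $F_{\widetilde{T}}$ — has already been carried out in the preceding two proofs. The corollary is a bookkeeping statement recording that the (projected best-response) operator $\widetilde{T}$, which is the one amenable to iterative/decentralized computation and to the fixed-point machinery used later, has precisely the Nash set as its fixed-point set.

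If a reader wanted a self-contained argument rather than the two-step citation, one could instead verify directly that $x \in N$ if and only if $x^i = \widetilde{T}^i(x)$ for every $i \in I$: the forward direction uses that at a Nash point each $x^i$ minimizes $f(\cdot,x^{-i})$ over $X^i$, hence a fortiori minimizes $f(\cdot,x^{-i}) + c\|\cdot - x^i\|^2$ (the penalty term vanishes at $z^i = x^i$); the reverse direction is exactly the contradiction argument in the proof of Proposition~\ref{prop:FT_FTilde}, exploiting convexity of $X^i$ and of $f(\cdot,x^{-i})$ to show that a strictly better response would make the penalized objective strictly smaller than $f(x^i,x^{-i})$. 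I would nonetheless present the short version via Propositions~\ref{prop:N_FT} and \ref{prop:FT_FTilde}, as it avoids repeating material.
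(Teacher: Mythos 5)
Your proposal is correct and matches the paper exactly: the paper presents this corollary as a direct consequence of Propositions \ref{prop:N_FT} and \ref{prop:FT_FTilde}, i.e., chaining $N = F_T$ and $F_T = F_{\widetilde{T}}$ under Assumption \ref{ass:feas}.a). No further comment is needed.
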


\subsection{Nash equilibria as social optima of an auxiliary problem} \label{sec:secIIIB}
We show that the set of Nash equilibria $N$ defined in \eqref{eq:def_Nash} coincides with the set of optimizers of an auxiliary minimization program. To this end, for all $i \in I$, let
\begin{align}
\mathcal{P}_a:~ \min_{\{x^i \in X^i\}_{i \in I}}  f_0(x) + \sum_{i \in I} \big [ f(x^i,x^{-i}) + f_a(x^i) \big ], \label{eq:opt_Pa}
\end{align}
where $f_a(x^i) = \sum_{t \in H} p^t (x^{it})^2$. Problem $\mathcal{P}_a$ is a centralized convex optimization program. Let $\widetilde{T}_a = \big ( \widetilde{T}^1_a,\ldots,\widetilde{T}^m_a \big)$ (see also equation (5) in \cite{Deori_etal_2016b}), where, for all $i \in I$, for any $c>0$,
\begin{align}
\widetilde{T}^i_a(x) = \arg &\min_{z^i \in X^i} f_0(z^i,x^{-i}) + f(z^i,x^{-i}) + f_a(z^i) \nonumber \\
&+ \sum_{\substack{k \in I\\ k \neq i}} \big [ f(x^k,(z^i,x^{-\{k,i\}})) + f_a(x^k) \big ] \nonumber \\
&+ 2c\|z^i - x^i\|^2. \label{eq:proof_N_opt1}
\end{align}
where $f(x^k,(z^i,x^{-\{k,i\}})) = \sum_{t \in H} x^{kt} p^t ( \sum_{k \in I, k \neq i} x^{kt} + z^{it} + x^{0t} )$, for all $k \in I$, $k \neq i$ due to \eqref{eq:agent_payoff}, , encoding the fact that the decision vector $z^i$ of agent $i$ appears also in the terms with $k \neq i$. By $x^{-\{k,i\}}$ we mean the elements of $x$ but for the ones corresponding to agents $k$ and $i$.  By $f_0(z^i,x^{-i})$ we imply $f_0(x^1,\ldots,x^{i-1},z^i,x^{i+1},\ldots,x^m) = \sum_{t \in H} x^{0t} p^t (  \sum_{k \in I, k \neq i} x^{kt} +  z^{it} +  x^{0t} )$.
We then have the following result, adapted to the notation of the current paper, due to Corollary 1 of \cite{Deori_etal_2016b}.
\begin{proposition} \label{prop:CorLuca}
(Corollary 1 of \cite{Deori_etal_2016b}) Under Assumption \ref{ass:feas}.a),  the set of minimizers of $\mathcal{P}_a$ coincides with the set of fixed points of the mapping $\widetilde{T}_a$.
\end{proposition}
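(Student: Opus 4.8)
The plan is to identify $\widetilde{T}_a$ as the blockwise proximal best-response associated with the centralized program $\mathcal{P}_a$, and then to show that its fixed points are exactly the global minimizers of $\mathcal{P}_a$. Write $G(x) = f_0(x) + \sum_{i \in I}\bigl[f(x^i,x^{-i}) + f_a(x^i)\bigr]$ for the objective of $\mathcal{P}_a$. The first observation is that, in \eqref{eq:proof_N_opt1}, the terms $\sum_{k \neq i} f_a(x^k)$ are constants with respect to the minimization variable $z^i$ and hence do not affect the $\arg\min$, while the remaining terms assemble exactly into $G(z^i,x^{-i})$; therefore $\widetilde{T}^i_a(x) = \arg\min_{z^i \in X^i}\bigl[G(z^i,x^{-i}) + 2c\|z^i-x^i\|^2\bigr]$. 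Since $G(\cdot,x^{-i})$ is convex (indeed one checks $G(x) = \sum_{t \in H} p^t\bigl[(\sum_{i\in I} x^{it}+x^{0t})^2 + \sum_{i\in I}(x^{it})^2\bigr]$, a nonnegative combination of squares as $p^t \ge 0$) and the proximal term with $c>0$ is strictly convex, the minimizer over the compact convex set $X^i$ is unique, so $\widetilde{T}_a$ is well defined.

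For the inclusion that every minimizer of $\mathcal{P}_a$ is a fixed point of $\widetilde{T}_a$, fix a minimizer $x^\star$ of $G$ over $X = X^1 \times \cdots \times X^m$. Restricting the minimization to the slice obtained by freezing all blocks but the $i$-th at $x^{\star,-i}$ shows $x^{\star i} \in \arg\min_{z^i \in X^i} G(z^i,x^{\star,-i})$. Then, for every $z^i \in X^i$, $G(z^i,x^{\star,-i}) + 2c\|z^i-x^{\star i}\|^2 \ge G(x^{\star i},x^{\star,-i}) = G(x^{\star i},x^{\star,-i}) + 2c\|x^{\star i}-x^{\star i}\|^2$, so $x^{\star i}$ also minimizes the proximally regularized objective; by uniqueness $\widetilde{T}^i_a(x^\star)=x^{\star i}$ for all $i \in I$, i.e.\ $x^\star$ is a fixed point of $\widetilde{T}_a$.

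For the reverse inclusion, fix a fixed point $x^\star$ of $\widetilde{T}_a$, so that, for each $i$, $x^{\star i}$ minimizes $z^i \mapsto G(z^i,x^{\star,-i}) + 2c\|z^i-x^{\star i}\|^2$ over $X^i$. The first sub-step is to discard the proximal term: if some $z^i \in X^i$ had $G(z^i,x^{\star,-i}) < G(x^{\star i},x^{\star,-i})$, then moving along $\zeta^i = \alpha z^i + (1-\alpha)x^{\star i} \in X^i$ and arguing exactly as in the proof of Proposition \ref{prop:FT_FTilde}, the linear-in-$\alpha$ decrease of $G(\cdot,x^{\star,-i})$ would dominate the quadratic penalty $2c\alpha^2\|z^i-x^{\star i}\|^2 = 2c\|\zeta^i-x^{\star i}\|^2$ for small $\alpha$, contradicting the blockwise proximal optimality of $x^{\star i}$; hence $x^{\star i} \in \arg\min_{z^i \in X^i} G(z^i,x^{\star,-i})$ for every $i$. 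The second sub-step promotes this blockwise optimality to global optimality: since $G$ is a convex quadratic it is differentiable, and the first-order optimality condition for each block gives $\langle \nabla_{x^i} G(x^\star),\, z^i - x^{\star i}\rangle \ge 0$ for all $z^i \in X^i$; summing over $i$ and using that $X = X^1 \times \cdots \times X^m$ yields $\langle \nabla G(x^\star),\, x - x^\star\rangle \ge 0$ for all $x \in X$, which by convexity of $G$ certifies that $x^\star$ minimizes $\mathcal{P}_a$.

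I expect the second sub-step of the reverse inclusion to be the only delicate point: the passage from blockwise to global optimality relies on smoothness of $G$, which holds here because $\mathcal{P}_a$ is quadratic but would fail for a general nondifferentiable convex objective; the removal of the proximal term, by contrast, is routine and mirrors an argument already used in the paper. Alternatively, since Proposition \ref{prop:CorLuca} is stated as an adaptation of \cite[Corollary 1]{Deori_etal_2016b}, one may instead verify that \eqref{eq:proof_N_opt1} matches the hypotheses of that corollary and invoke it directly.
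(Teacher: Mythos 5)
Your proof is correct, but note that the paper does not actually prove this proposition: it imports it wholesale as Corollary 1 of \cite{Deori_etal_2016b}, adapted to the present notation, so what you have produced is a self-contained replacement for that citation rather than a variant of an in-paper argument. Your structural observation is the right one: the objective in \eqref{eq:proof_N_opt1} is exactly $G(z^i,x^{-i}) + 2c\|z^i-x^i\|^2$ with $G$ the objective of $\mathcal{P}_a$ (a small quibble: the term $\sum_{k\neq i} f_a(x^k)$ is itself part of $G(z^i,x^{-i})$ rather than something left over, but since it is constant in $z^i$ this is immaterial). The forward inclusion is routine; the removal of the proximal term correctly mirrors the convexity argument the paper uses in Proposition \ref{prop:FT_FTilde}; and the passage from blockwise to global optimality by summing the per-block variational inequalities $\langle \nabla_{x^i} G(x^\star), z^i - x^{\star i}\rangle \ge 0$ over the Cartesian product $X = X^1\times\cdots\times X^m$ is the standard argument for smooth convex objectives over product sets. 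You are right to single out differentiability of $G$ as the load-bearing hypothesis in that last step---blockwise optimality does not imply global optimality for general nonsmooth convex coupling---and it holds here because $G$ is a convex quadratic with $p^t\ge 0$. Your closing alternative (verify the hypotheses of \cite[Corollary 1]{Deori_etal_2016b} and invoke it) is precisely what the paper does; your direct argument is the genuinely additional content, and it has the benefit of exposing exactly which structural features (nonnegativity of the price coefficients, the product structure of $X$, and smoothness of the aggregate objective) the result depends on.
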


\begin{proposition} \label{prop:Nash_opt}
Under Assumption \ref{ass:feas}.a), the set of Nash equilibria $N$, and minimizers of $\mathcal{P}_a$ coincide, i.e.,
\begin{align}
N = \arg \min_{\{x^i \in X^i\}_{i \in I}}  f_0(x) + \sum_{i \in I} \big [ f(x^i,x^{-i}) + f_a(x^i) \big ]. \label{eq:prop_Nash_opt}
\end{align}
\end{proposition}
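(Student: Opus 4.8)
The plan is to leverage the fixed-point characterizations already in hand and to reduce \eqref{eq:prop_Nash_opt} to the single identity $\widetilde{T}_a = \widetilde{T}$, where $\widetilde{T}$ is the map in \eqref{eq:Ttilde} and $\widetilde{T}_a$ is the map built from $\mathcal{P}_a$ in \eqref{eq:proof_N_opt1}. Indeed, once $\widetilde{T}_a = \widetilde{T}$ is established their fixed-point sets coincide, $F_{\widetilde{T}_a} = F_{\widetilde{T}}$, and then combining Corollary \ref{cor:N_FTilde} (which gives $N = F_{\widetilde{T}}$) with Proposition \ref{prop:CorLuca} (which gives that the set of minimizers of $\mathcal{P}_a$ is exactly $F_{\widetilde{T}_a}$) yields $N = F_{\widetilde{T}} = F_{\widetilde{T}_a} = \arg\min \mathcal{P}_a$, which is precisely \eqref{eq:prop_Nash_opt}.

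To establish $\widetilde{T}_a = \widetilde{T}$, I would fix $i \in I$ and $x \in X$ and simplify the objective defining $\widetilde{T}^i_a(x)$ in \eqref{eq:proof_N_opt1}. The terms $f_a(x^k)$ with $k \neq i$ are constant in $z^i$ and drop out of the $\arg\min$, whereas $f_0(z^i,x^{-i})$, $f(z^i,x^{-i})$, $f_a(z^i)$ and $\sum_{k\neq i} f(x^k,(z^i,x^{-\{k,i\}}))$ each contribute $z^i$-dependent terms. Inserting the explicit affine-quadratic expressions for $f$, $f_0$ and $f_a$ coming from \eqref{eq:agent_payoff}, and writing $A^t = \sum_{k\neq i} x^{kt} + x^{0t}$, a direct computation collapses the $z^i$-dependent part of the objective to
\begin{align}
&\sum_{t \in H} p^t \big[ 2 (z^{it})^2 + 2 A^t z^{it} \big] + 2c\|z^i - x^i\|^2 \nonumber\\
&\qquad = 2 \big[ f(z^i,x^{-i}) + c\|z^i - x^i\|^2 \big] + \text{const}, \nonumber
\end{align}
with a constant independent of $z^i$; equivalently, with $x^{-i}$ held fixed, the objective of $\mathcal{P}_a$ viewed as a function of $z^i \in X^i$ coincides with $2 f(z^i,x^{-i})$ up to a $z^i$-independent additive term, so that $\mathcal{P}_a$ is a potential-function reformulation of the game. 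Since the $\arg\min$ over $z^i \in X^i$ is invariant under multiplication of the objective by the positive constant $2$ and under the addition of constants, this gives $\widetilde{T}^i_a(x) = \arg\min_{z^i\in X^i} f(z^i,x^{-i}) + c\|z^i-x^i\|^2 = \widetilde{T}^i(x)$; as $i$ and $x$ were arbitrary, $\widetilde{T}_a = \widetilde{T}$.

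The only real obstacle is the algebraic bookkeeping above: one must verify that the ``cross'' contributions of $f_0(z^i,x^{-i})$ and of $\sum_{k\neq i} f(x^k,(z^i,x^{-\{k,i\}}))$ — jointly $2\sum_{t\in H} p^t A^t z^{it}$ — together with the own-demand quadratic $f_a(z^i) = \sum_{t\in H} p^t (z^{it})^2$ and with $f(z^i,x^{-i})$ rebuild exactly $2 f(z^i,x^{-i})$. This is precisely where the particular choice $f_a(x^i) = \sum_{t\in H} p^t (x^{it})^2$ is essential: it is the term that doubles each agent's own payoff and thereby turns $\mathcal{P}_a$ into a potential function for the game. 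No hypotheses beyond Assumption \ref{ass:feas}.a) are needed, since that assumption already guarantees that $\widetilde{T}$ and $\widetilde{T}_a$ are well defined and that Corollary \ref{cor:N_FTilde} and Proposition \ref{prop:CorLuca} apply.
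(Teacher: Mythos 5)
Your proposal is correct and follows essentially the same route as the paper: both reduce the claim to the identity $\widetilde{T}^i_a(x) = \widetilde{T}^i(x)$ by expanding the objective in \eqref{eq:proof_N_opt1}, collecting the $z^i$-dependent terms into $2\big[f(z^i,x^{-i}) + c\|z^i-x^i\|^2\big]$ plus a constant, and then invoking Corollary \ref{cor:N_FTilde} together with Proposition \ref{prop:CorLuca}. The algebraic bookkeeping you flag as the only delicate step checks out exactly as in the paper's equations \eqref{eq:proof_N_opt2}--\eqref{eq:proof_N_opt3}.
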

\begin{proof}
By the definition of $\widetilde{T}^i_a(x)$ in \eqref{eq:proof_N_opt1} we have that
\begin{align} \label{eq:proof_N_opt2}
\widetilde{T}^i_a(x) = \arg &\min_{z^i \in X^i} \Big [ \sum_{t \in H} z^{it} p^t \Big ( \sum_{\substack{j \in I\\ j \neq i}} x^{jt} + z^{it} + x^{0t}  \Big ) \nonumber \\
&+ \sum_{t \in H} z^{it} p^t
 \Big ( \sum_{\substack{k \in I\\ k \neq i}} x^{kt} + x^{0t} \Big )  + \sum_{t \in H} p^t (z^{it})^2 \Big ] \nonumber \\
&+ 2c \|z^i - x^i\|^2,
\end{align}
where the first term in the summation corresponds to $f(z^i,x^{-i})$ as defined in \eqref{eq:agent_payoff}, the second term corresponds to $f_0(z^i,x^{-i}) + \sum_{k \in I, k \neq i} f(x^k,(z^i,x^{-\{k,i\}}))$ where all terms that do not depend on the decision vector $z^i$ have been dropped as the leave the minimizer unaffected, and the third term is $f_a(z^i)$ ($f_a(x^k)$ is constant and has been dropped).
Rearranging terms, we obtain
\begin{align} \label{eq:proof_N_opt3}
&\widetilde{T}^i_a(x) = \arg \min_{z^i \in X^i} 2\sum_{t \in H} z^{it} p^t \Big ( \sum_{\substack{j \in I \\ j \neq i}} x^{jt} +z^{it}  + x^{0t} \Big ) \nonumber \\
&~~~~~~~~~~~~~~~~~~~~~~~~~~~~~~~~+ 2c \|z^i - x^i\|^2 \nonumber \\
&= \arg \min_{z^i \in X^i} f(z^i,x^{-i}) + c \|z^i - x^i\|^2 = \widetilde{T}^i(x),
\end{align}
where in the second equality we used \eqref{eq:agent_payoff} and rescaled the objective by a factor of 2, since this does not affect the resulting minimizer. The last equality follows from the definition of $\widetilde{T}^i$ in \eqref{eq:Ttilde_i}.
Equation \eqref{eq:proof_N_opt3} implies that $\widetilde{T}^i_a$ and $\widetilde{T}^i$ are identical. The latter, together with Corollary \ref{cor:N_FTilde} and Proposition \ref{prop:CorLuca}, concludes the proof. \hfill $\Box$
\end{proof}

If we impose also Assumption \ref{ass:feas}.b), the objective function in \eqref{eq:opt_Pa} becomes strictly convex due to the presence of the auxiliary term. Therefore, it admits a unique minimizer and, as a result of Proposition \ref{prop:Nash_opt}, the game of Section \ref{sec:secIIB} admits a unique Nash equilibrium. By Corollary \ref{cor:N_FTilde} this in turn implies that the mapping $\widetilde{T}$ has a unique fixed-point. The uniqueness of the Nash equilibrium is due to \eqref{eq:prop_Nash_opt}, which relies on the particular structure of the objective functions in \eqref{eq:agent_payoff}; for general convex pay-off functions \eqref{eq:prop_Nash_opt}, however, this might not be the case.

The interpretation of \eqref{eq:opt_Pa} is that the auxiliary term acts like a variance penalty in regularization methods (similar to overfitting prevention in regression), promoting least norm solutions, thus implicitly enforcing uniformity in the agents' decisions, and shall not be related to quadratic penalty terms in augmented Lagrangian methods. The relative importance of this term becomes negligible as the number of agents increases.

\subsection{Price of anarchy} \label{sec:secIIIC}
In this subsection we show that as the number of agents increases, the Nash equilibrium of the game in Section \ref{sec:secIIB} achieves the social welfare optimum.

For our analysis we assume that the price coefficients $\{p^t\}_{t \in H}$ are deterministic quantities satisfying Assumption \ref{ass:feas}.b), whereas the consumption level $\gamma^i$, $i \in I$ in \eqref{eq:prob_SO_gamma} and the upper and lower limits in \eqref{eq:prob_SO_limits} are random variables, extracted according to a given probability distribution. We impose the following assumption on the infinite sequence of random vectors $\{\gamma^i,\ \underline{x}^{i},\ \overline{x}^{i}\}_{i\geq 1}$, where $\underline{x}^{i}=[\underline{x}^{i0},\ldots,\underline{x}^{i(h-1)}]$, $\overline{x}^{i}=[\overline{x}^{i0},\ldots,\overline{x}^{i(h-1)}]$.

\begin{assumption} \label{ass:gamma}
Let $\{\gamma^i,\ \underline{x}^{i},\ \overline{x}^{i}\}_{i \geq 1}$ be an infinite sequence of random vectors on a probability space $(\Omega, \mathcal{F}, \mathbb{P})$\footnote{Note that if $\{\gamma^i,\ \underline{x}^{i},\ \overline{x}^{i}\}$, $i\geq 1$, is defined on a given set, by $\mathbb{P}$ we denote the probability measure induced on the infinite cartesian product of these sets. For more details on the mathematical construction of such a measure the reader is referred to \cite{Vidyasagar_1997} (Section 2.4.1, p. 29). }. We assume that
\begin{enumerate}
\item $\{\gamma^i,\ \underline{x}^{i},\ \overline{x}^{i}\}_{i \geq 1}$ are {a sequence of independent and identically distributed (i.i.d.) random vectors.}
\item $\gamma^1$ is a positive random {variable}, while $\underline{x}^{1},\ \overline{x}^{1}$ are non-negative random vectors.
\item $\mathbb{E}[\gamma^1] < \infty$ and $\mathbb{E}[(\gamma^1)^2] < \infty$, where $\mathbb{E}[\cdot]$ denotes the expectation operator associated with the probability measure $\mathbb{P}$.
\end{enumerate}
\end{assumption}
Due to the i.i.d. requirement of Assumption \ref{ass:gamma}.a), the statement of part b) would also hold for all $\gamma^i$, and $\{\underline{x}^{i},\ \overline{x}^{i}\}$, $i\geq1$. By Assumptions \ref{ass:gamma}.a)-b), $\mathbb{E}[\gamma^i] > 0$, for any $i \geq 1$.
We employ the following law of large numbers type of argument, and write that an event holds ($\mathbb{P}$-a.s.) when it holds with probability one with respect to $\mathbb{P}$.

\begin{theorem}[\cite{Shiryaev}, Chapter IV, \S3, Theorem 3] \label{thm:limit}
{Let $\{y^j\}_{j \geq 1}$ be a sequence of i.i.d. random variables such that $\mathbb{E}[|y^1|] < \infty$. For any given index set $J_m$ with cardinality $|J_m| = m$, we then have that
\begin{align}\label{eq:limit}
&\lim_{m \to \infty} \frac{1}{m} \sum_{j \in J_m} y^j = \mathbb{E}[y^1],~~ (\text{$\mathbb{P}$-a.s.})
\end{align}
}
\end{theorem}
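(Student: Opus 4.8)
The statement is Kolmogorov's strong law of large numbers, so the plan is to run the classical truncation argument. First I would dispose of the index sets: since the $y^j$ are i.i.d., for each fixed $m$ the sum $\sum_{j\in J_m}y^j$ has the same law as $\sum_{j=1}^m y^j$, and for a consistent (e.g.\ nested) choice of the $J_m$ one may relabel so that $J_m=\{1,\dots,m\}$; it thus suffices to prove $\frac1n\sum_{j=1}^n y^j \to \mathbb{E}[y^1]$ ($\mathbb{P}$-a.s.). Next I would truncate at the $j$-th level, setting $z^j = y^j\,\mathbf{1}_{\{|y^j|\le j\}}$. Since $\sum_{j\ge 1}\mathbb{P}(|y^j|>j)=\sum_{j\ge1}\mathbb{P}(|y^1|>j)\le \mathbb{E}[|y^1|]<\infty$, the Borel--Cantelli lemma gives $z^j=y^j$ for all large $j$ almost surely, so the problem reduces to analysing $\frac1n\sum_{j=1}^n z^j$.

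The truncated variables are independent but not identically distributed. By dominated convergence $\mathbb{E}[z^j]=\mathbb{E}[y^1\mathbf{1}_{\{|y^1|\le j\}}]\to \mathbb{E}[y^1]$, and Ces\`aro averaging then yields $\frac1n\sum_{j=1}^n\mathbb{E}[z^j]\to\mathbb{E}[y^1]$, so it remains to show $\frac1n\sum_{j=1}^n\bigl(z^j-\mathbb{E}[z^j]\bigr)\to 0$ ($\mathbb{P}$-a.s.). For this I would bound $\sum_{j\ge1}\frac{\mathrm{Var}(z^j)}{j^2}\le \sum_{j\ge1}\frac{1}{j^2}\mathbb{E}[(y^1)^2\mathbf{1}_{\{|y^1|\le j\}}]$, interchange the two sums, and use $\sum_{j\ge k}j^{-2}\le 2/k$ to majorise the result by a constant times $\mathbb{E}[|y^1|]<\infty$. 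Kolmogorov's one-series theorem (via the Kolmogorov maximal inequality) then shows that $\sum_{j\ge1}\frac{z^j-\mathbb{E}[z^j]}{j}$ converges almost surely, and Kronecker's lemma turns this into $\frac1n\sum_{j=1}^n(z^j-\mathbb{E}[z^j])\to 0$ ($\mathbb{P}$-a.s.), which combined with the convergence of the Ces\`aro means of $\mathbb{E}[z^j]$ completes the argument.

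The technical core — and the step I expect to take the most care — is the variance estimate together with the Kolmogorov maximal inequality underpinning the one-series convergence theorem; everything else is bookkeeping. An alternative that avoids the maximal inequality is Etemadi's proof: split $y^j$ into its positive and negative parts so that one may assume $y^j\ge 0$, truncate at level $j$, establish the limit along a geometric subsequence $n_k=\lfloor\rho^k\rfloor$ using Chebyshev's inequality and Borel--Cantelli, extend to all $n$ by monotonicity of partial sums, and finally let $\rho\downarrow 1$. Either route delivers \eqref{eq:limit}; given that the result is cited from \cite{Shiryaev}, I would simply reference it rather than reproduce the details.
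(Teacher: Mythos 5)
The paper does not prove this statement at all---it is imported verbatim from Shiryaev (Chapter~IV, \S3, Theorem~3)---and your outline is precisely the classical Kolmogorov truncation argument used there (truncation at level $j$, Borel--Cantelli to identify $z^j$ with $y^j$ eventually, the variance estimate $\sum_{j}\mathrm{Var}(z^j)/j^2<\infty$, the one-series theorem plus Kronecker's lemma), so it is correct and matches the cited source's approach; your concluding choice to simply reference the result is exactly what the authors do. Your preliminary remark that the index sets $J_m$ need to be nested or relabelled for the almost-sure statement to be meaningful is also the right reading of the theorem's slightly loose phrasing---in the paper's applications $J_m=I=\{1,\dots,m\}$, so nothing is lost.
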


Consider any given index set $H$ with $|H| = h$, $h \geq 1$, and let $y^t \in \mathbb{R}$, $y^t \geq 0$, for all $t \in H$. Let also $\bar{y} \in \mathbb{R}$ such that $\sum_{t \in H} y^t = \bar{y}$. Due to norm equivalence we have that $(\| y \|_1/\sqrt{h}) \leq \| y \|_2 \leq \| y \|_1$, where $y = (y^1,\ldots,y^{h})$, i.e.,
\begin{align}
\frac{\bar{y}^2}{h} \leq \sum_{t \in H} (y^t)^2 \leq \bar{y}^2. \label{eq:norms}
\end{align}
which we exploit in the proof of Theorem \ref{thm:limit_Nash_opt}. Denote by
$F^m(x) = f_0(x) + \sum_{i \in I}  f(x^i,x^{-i})$
the objective function of $\mathcal{P}$, and let
$F^m_a(x) = \sum_{i \in I} f_a(x^i)$.
The objective function of $\mathcal{P}_a$ in \eqref{eq:opt_Pa} can be thus written as $F^m(x) + F^m_a(x)$. We introduce the superscript $m$ in our notation to emphasize the fact that the relevant objective functions correspond to a set-up of $m$ agents, since in the sequel we will let $m$ tend to infinity.
Notice that, for any $x \in X$,

\begin{align}
F^m(x) &= \sum_{t \in H} p^t \Big ( \sum_{i \in I} x^{it} + x^{0t} \Big )^2 \nonumber \\ &\geq \underline{p} \sum_{t \in H} \Big ( \sum_{i \in I} x^{it} + x^{0t} \Big )^2 \nonumber \\ &\geq \underline{p} \sum_{t \in H} \Big ( \sum_{i \in I} x^{it} \Big )^2 \geq \underline{p} \frac{\Big ( \sum_{i \in I} \gamma^i \Big )^2}{h} >0, \label{eq:aux_eq}
\end{align}
where the first inequality is obtained by setting $\underline{p} = \min_{t \in H} p^t$, and the second one by ommitting the non-negative term $x^{0t}$. To see the
third inequality notice that $\sum_{t \in H} \left( \sum_{i \in I} x^{it} \right) = \sum_{i \in I} \left( \sum_{t \in H} x^{it} \right) = \sum_{i \in I} \gamma^i$. The desired inequality follows then by the left-hand side of \eqref{eq:norms} with $\sum_{i \in I} x^{it}$, $\sum_{i \in I} \gamma^i$ in place of $y^t$ and $\bar{y}$, respectively. The last inequality is strict, due to the fact that $\underline{p} > 0$ ($H$ is a finite set) as a result of Assumption \ref{ass:feas}.b), and the fact that $\gamma^i > 0$, for all $i \geq 1$, due to Assumption \ref{ass:gamma}.a).

By \cite{Koutsoupias_Papadimitriou_1999}, we have the following definition for the so called \emph{price of anarchy}, which has mainly appeared in the computer science literature, mostly focused on problems with discrete decision variables.
\begin{definition} \label{def:PoA}
For a given $m$, $F^m(x_a^\star)/F^m(x^\star)$ is defined as the price of anarchy for the game in Section \ref{sec:secIIB}.
\end{definition}
Note that according to the discussion below Proposition \ref{prop:Nash_opt} the game under study admits a unique Nash equilibrium. In the opposite case, the numerator of the ratio defined as the price of anarchy shall be replaced by $\max_{x \in N} F^m(x)$, where $N$ is defined as the set of Nash equilibria, to account for the worst-case value achieved by a Nash equilibrium.

\begin{theorem} \label{thm:limit_Nash_opt}
Consider Assumptions \ref{ass:feas} and \ref{ass:gamma}. Let $x^\star \in X$, $x_a^\star \in X$ be any minimizer of $\mathcal{P}$ and $\mathcal{P}_a$, respectively. We then have that
\begin{align}
\lim_{m \to \infty} \frac{F^m(x_a^\star)}{F^m(x^\star)} = 1,~~ (\text{$\mathbb{P}$-a.s.}), \label{eq:limit_Nash_opt}
\end{align}
where $F^m(x^\star) > 0$, i.e., the price of anarchy tends to $1$.
\end{theorem}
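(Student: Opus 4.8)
The plan is to show that the two objective values $F^m(x_a^\star)$ and $F^m(x^\star)$ differ by a term that is negligible relative to $F^m(x^\star)$ as $m \to \infty$. The key observation is that $x_a^\star$ minimizes $F^m + F^m_a$ over $X$, whereas $x^\star$ minimizes $F^m$ over $X$; hence, since both points lie in $X$,
\begin{align}
F^m(x^\star) \le F^m(x_a^\star) \le F^m(x_a^\star) + F^m_a(x_a^\star) \le F^m(x^\star) + F^m_a(x^\star). \nonumber
\end{align}
The first inequality is optimality of $x^\star$ for $\mathcal{P}$; the third is optimality of $x_a^\star$ for $\mathcal{P}_a$ together with $F^m_a \ge 0$. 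Dividing through by $F^m(x^\star) > 0$ (which is positive by \eqref{eq:aux_eq}), it suffices to prove that $F^m_a(x^\star)/F^m(x^\star) \to 0$ ($\mathbb{P}$-a.s.).

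First I would bound the numerator from above. By definition $F^m_a(x^\star) = \sum_{i \in I} \sum_{t \in H} p^t (x^{\star it})^2 \le \bar{p} \sum_{i \in I} \sum_{t \in H} (x^{\star it})^2$ with $\bar{p} = \max_{t \in H} p^t$. For each fixed $i$, the right-hand side of \eqref{eq:norms} applied to $y^t = x^{\star it}$, $\bar y = \gamma^i$ gives $\sum_{t \in H} (x^{\star it})^2 \le (\gamma^i)^2$, so $F^m_a(x^\star) \le \bar{p} \sum_{i \in I} (\gamma^i)^2$. Next I would bound the denominator from below by the chain already established in \eqref{eq:aux_eq}: $F^m(x^\star) \ge \underline{p}\, \big(\sum_{i \in I} \gamma^i\big)^2 / h$. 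Combining, for every $m$,
\begin{align}
0 \le \frac{F^m_a(x^\star)}{F^m(x^\star)} \le \frac{\bar{p}\, h}{\underline{p}} \cdot \frac{\sum_{i \in I} (\gamma^i)^2}{\big(\sum_{i \in I} \gamma^i\big)^2} = \frac{\bar{p}\, h}{\underline{p}} \cdot \frac{\frac{1}{m}\sum_{i \in I} (\gamma^i)^2}{m \big(\frac{1}{m}\sum_{i \in I} \gamma^i\big)^2}. \nonumber
\end{align}

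Now I would invoke Theorem \ref{thm:limit} twice. By Assumption \ref{ass:gamma}.c), $\mathbb{E}[(\gamma^1)^2] < \infty$, so $\frac{1}{m}\sum_{i \in I} (\gamma^i)^2 \to \mathbb{E}[(\gamma^1)^2]$ ($\mathbb{P}$-a.s.); similarly, since $\mathbb{E}[\gamma^1] < \infty$, $\frac{1}{m}\sum_{i \in I} \gamma^i \to \mathbb{E}[\gamma^1] > 0$ ($\mathbb{P}$-a.s.), where positivity comes from Assumption \ref{ass:gamma}.a)--b). Hence on a probability-one event the numerator of the last fraction converges to the finite constant $\mathbb{E}[(\gamma^1)^2]$ and the quantity $\big(\frac{1}{m}\sum_{i\in I}\gamma^i\big)^2$ converges to $(\mathbb{E}[\gamma^1])^2 > 0$, so the whole expression behaves like $\text{const}/m \to 0$. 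Therefore $F^m_a(x^\star)/F^m(x^\star) \to 0$ ($\mathbb{P}$-a.s.), and plugging back into the sandwich yields $F^m(x_a^\star)/F^m(x^\star) \to 1$ ($\mathbb{P}$-a.s.), with $F^m(x^\star) > 0$ guaranteed by \eqref{eq:aux_eq}. The main subtlety — really the only one — is handling the almost-sure event correctly: the two strong-law limits hold each on a probability-one set, and one must intersect them (and the event $\{\gamma^i > 0 \ \forall i\}$, which has probability one by Assumption \ref{ass:gamma}) to get a single probability-one event on which the whole argument runs; division by $\frac{1}{m}\sum_i \gamma^i$ is legitimate there since that average is eventually bounded away from $0$. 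Everything else is the two norm-equivalence bounds from \eqref{eq:norms} and \eqref{eq:aux_eq}, which are already in hand.
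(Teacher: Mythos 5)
Your proposal is correct and follows essentially the same route as the paper's proof: the same optimality sandwich reducing the claim to $F^m_a(x^\star)/F^m(x^\star) \to 0$, the same norm-equivalence bounds from \eqref{eq:norms} giving $\overline{p}\,h\sum_{i}(\gamma^i)^2 / \bigl(\underline{p}\,(\sum_i \gamma^i)^2\bigr)$, and the same double application of Theorem \ref{thm:limit}. The only (cosmetic) differences are that the paper first establishes the ratio bound for arbitrary feasible points before specializing to $x^\star$, and that you are more explicit about intersecting the probability-one events, which the paper leaves implicit.
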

\begin{proof}
Let $x, x_a \in X$ be feasible solutions, possibly different, of $\mathcal{P}$ and $\mathcal{P}_a$, respectively.
By the definition of $F^m$, $F^m_a$, and since $F^m(x)>0$ for any $x \in X$, we have that
\begin{align}
\frac{F^m_a(x_a)}{F^m(x)} = \frac{\sum_{t \in H} p^t \sum_{i \in I} (x^{it}_a)^2}{\sum_{t \in H} p^t \left( \sum_{i \in I} x^{it} + x^{0t} \right)^2}. \label{eq:proof_thm1}
\end{align}
Let $\overline{p}=\max_{t \in H} p^t$ and $\underline{p} = \min_{t \in H} p^t > 0$, where the inequality is strict due to Assumption \ref{ass:feas}.b). We have that
\begin{align}
\frac{F^m_a(x_a)}{F^m(x)} \leq \frac{ \overline{p} \sum_{t \in H} \sum_{i \in I} (x^{it}_a)^2}{ \underline{p} \sum_{t \in H} \left( \sum_{i \in I} x^{it} + x^{0t} \right)^2}. \label{eq:proof_thm2}
\end{align}

Since $x^{it}_a$ is feasible for $\mathcal{P}_a$, we have that $\sum_{t \in H} x^{it}_a = \gamma^i$, for all $i \in I$. By the right-hand side of \eqref{eq:norms} with $x^{it}$, $\gamma^i$ in place of $y^t$ and $\bar{y}$, respectively, we obtain that
\begin{align}
\sum_{t \in H} (x^{it}_a)^2 \leq (\gamma^i)^2, \text{ for all } i \in I. \label{eq:proof_thm3}
\end{align}
By the derivation of \eqref{eq:aux_eq}, we obtain that
\begin{align}
\sum_{t \in H} \left( \sum_{i \in I} x^{it} + x^{0t} \right)^2 \geq \frac{\Big ( \sum_{i \in I} \gamma^i \Big )^2}{h}. \label{eq:proof_thm4}
\end{align}
Employing \eqref{eq:proof_thm3}, \eqref{eq:proof_thm4}, and by exchanging the summation order in the numerator of \eqref{eq:proof_thm2}, we have that
\begin{align}
\frac{F^m_a(x_a)}{F^m(x)} \leq \frac{ \overline{p} h \sum_{i \in I} (\gamma^i)^2}{\underline{p} \Big ( \sum_{i \in I} \gamma^i \Big )^2} = \frac{ \overline{p} h \frac{\sum_{i \in I} (\gamma^i)^2}{m}}{ \underline{p} m \Big (  \frac{\sum_{i \in I} \gamma^i}{m} \Big )^2}. \label{eq:proof_thm5}
\end{align}

Applying Theorem \ref{thm:limit} twice, once with $\gamma^i$ and once with $(\gamma^i)^2$ in place of $y^i$, we have that $\mathbb{P}$-a.s.
\begin{align}
\lim_{m\rightarrow\infty} \tfrac{\sum_{i \in I} \gamma^i}{m}
&= \mathbb{E}[\gamma^1] \nonumber \\
\lim_{m\rightarrow\infty} \tfrac{\sum_{i \in I} (\gamma^i)^2}{m}
&= \mathbb{E}[(\gamma^1)^2]  \nonumber
\end{align}
However, since $\mathbb{E}[\gamma^1] > 0$ and $\mathbb{E}[(\gamma^1)^2]/\big ( \mathbb{E}[\gamma^1] \big )^2 < \infty$ due to Assumption \ref{ass:gamma}.c),
\begin{align}
\lim_{m \to \infty} \frac{ \overline{p} h \frac{\sum_{i \in I} (\gamma^i)^2}{m}}{ \underline{p} m \Big (  \frac{\sum_{i \in I} \gamma^i}{m} \Big )^2} = 0.~~ (\text{$\mathbb{P}$-a.s.}) \label{eq:proof_thm7}
\end{align}
Therefore, since \eqref{eq:proof_thm5} holds for any $\{\gamma^i\}_{i \in I}$, we have that
\begin{align}
\lim_{m\rightarrow\infty} \frac{F^m_a(x_a)}{F^m(x)} = 0,~~ (\text{$\mathbb{P}$-a.s.}) \label{eq:proof_thm8}
\end{align}

Let now $x^\star, x^\star_a \in X$ denote an optimal solution of $\mathcal{P}$ and $\mathcal{P}_a$, respectively. By optimality of $x^\star_a$ we thus have that
\begin{align}
F^m(x^\star_a) + F^m_a(x^\star_a) \leq F^m(x^\star) + F^m_a(x^\star). \label{eq:proof_thm9}
\end{align}
Rearranging the terms in \eqref{eq:proof_thm9}, and since $F^m(x^\star) > 0$ (see discussion above Theorem \ref{thm:limit_Nash_opt}), we obtain
\begin{align}
\frac{F^m(x^\star_a) - F^m(x^\star)}{F^m(x^\star)} &\leq \frac{F^m_a(x^\star) - F^m_a(x^\star_a)}{F^m(x^\star)} \leq \frac{F^m_a(x^\star)}{F^m(x^\star)}, \label{eq:proof_thm10}
\end{align}
where the last inequality is due to the fact that $F^m_a(x^\star_a) \geq 0$. Since \eqref{eq:proof_thm8} holds for any $x, x_a \in X$, it will also hold for $x = x_a = x^\star$. Therefore, \eqref{eq:proof_thm8} and \eqref{eq:proof_thm10} lead to
\begin{align}
\lim_{m\rightarrow\infty} \frac{F^m(x^\star_a) - F^m(x^\star)}{F^m(x^\star)} = 0,~~ (\text{$\mathbb{P}$-a.s.}) \label{eq:proof_thm11}
\end{align}
which in turn implies \eqref{eq:limit_Nash_opt}, thus concluding the proof. \hfill $\Box$
\end{proof}

Informally speaking, the price of anarchy quantifies the gap between the social optimum and the value of the non-cooperative game; Theorem \ref{thm:limit_Nash_opt} implies that this gap tends to zero as the number of agents increases.
\begin{remark}
In Theorem \ref{thm:limit_Nash_opt} we used the fact that the parameters that give rise to a heterogeneous vehicle population are random and satisfy Assumption \ref{ass:gamma}. This offers a more flexible framework to model agents' heterogeneity, e.g., encoding prior information on their distribution,
and is in line with the mean-field game theoretic approach adopted in \cite{Caines_etal_2007} for unconstrained, continuous time quadratic games. However, if instead of Assumption \ref{ass:gamma} we assume that for all $i \in I$, $\gamma^i \in [\underline{\gamma},\overline{\gamma}]$ for given deterministic quantities $\underline{\gamma},\overline{\gamma} \in \mathbb{R}$ (similarly for $\underline{x}^i,\overline{x}^i$) with $\underline{\gamma} > 0$, the result of Theorem \ref{thm:limit_Nash_opt} remains valid not probabilistically, but for all $\gamma^i \in [\underline{\gamma},\overline{\gamma}]$. In particular, the proof remains unchanged but for the following modifications: The inequalities in \eqref{eq:proof_thm5} shall be replaced by
\begin{align}
\frac{F^m_a(x_a)}{F^m(x)} \leq \frac{ \overline{p} h \sum_{i \in I} (\gamma^i)^2}{\underline{p} \Big ( \sum_{i \in I} \gamma^i \Big )^2} \leq \frac{\overline{p} h m \overline{\gamma}^2 }{\underline{p} m^2 \underline{\gamma}^2} \leq \frac{\overline{p} h  \overline{\gamma}^2 }{\underline{p} m \underline{\gamma}^2} , \label{eq:proof_thm12}
\end{align}
where the numerator of the second inequality follows from $\sum_{i \in I} (\gamma^i)^2 \leq m \overline{\gamma}^2$ and the denominator from $( \sum_{i \in I} \gamma^i )^2 \geq m^2 \underline{\gamma}^2 > 0$. Equation \eqref{eq:proof_thm12} leads to $\lim_{m\rightarrow\infty} (F^m_a(x_a)/F^m(x)) = 0$ and from \eqref{eq:proof_thm9} the proof of Theorem \ref{thm:limit_Nash_opt} remains unchanged, with the relevant statements holding robustly for all $\gamma^i \in [\underline{\gamma},\overline{\gamma}]$, $i \in I$, instead of $\mathbb{P}$-a.s.
\end{remark}

Note that the aggregate quantity $\frac{1}{m} \sum_{i \in I} x^{it}$ exhibits the same behaviour with the corresponding objective functions of $\mathcal{P}$ and $\mathcal{P}_a$ in Theorem \ref{thm:limit_Nash_opt}, since under Assumption \ref{ass:gamma}.b) the latter are strictly convex with respect to the agents aggregate.

To illustrate the result of Theorem \ref{thm:limit_Nash_opt}, we performed a numerical investigation parametric with respect to the number of agents $m$. We considered a time horizon $h=12$, and price coefficients $(p^0,\ldots,p^{h-1}) = (0.1, 1, 1.9, 2.8, 3.7, 4.6, 5.5, 6.4, 7.3, 8.2, 9.1, 10)$. For simplicity we assumed that the probability mass is concentrated to the lower and upper limits $\underline{x}^{it} = 0$ and $\overline{x}^{it} = 1$ for all $i \in I$, $t \in H$ (assuming normalized charging rates) that are effectively being treated as deterministic, whereas the charging levels $\gamma^i$, $i \in I$, were extracted in an i.i.d. fashion from a uniform distribution with support $[0, 12]$. We consider a zero non-PEV demand, i.e., $\hat{x}^{0t} = 0$ for all $t \in H$ (see Section \ref{sec:secIIA} for a definition of $\hat{x}^{0t}$). For each $m$, we performed 100 multi-extractions of $\{\gamma^i\}_{i \ge 1}$, and calculated the average of the ratio $(F^m(x^\star_a) - F^m(x^\star))/F^m(x^\star)$. As shown in Figure \ref{fig:error_limit}, and following \eqref{eq:limit_Nash_opt}, this ratio tends to zero as the number of agents increases for every set of heterogeneity parameters, but not necessarily in a monotone way.
Note that $x^\star, x^\star_a$ depend on the extracted $\{\gamma^i\}_{i \ge1}$; however, we suppress this dependence in the notation for simplicity.

Figure \ref{fig:valleyFill} investigates the case of a non-zero normalized non-PEV demand, i.e., $x^{0t}/m = \hat{x}^{0t} \neq 0$ (green), and considers
the normalized total consumption profile $(1/m)(\sum_{i\in I}x^{it} +x^{0t}) = (1/m) \sum_{i\in I}x^{it} + \hat{x}^{0t}$ obtained by solving problem $\mathcal{P}$ (blue) and problem $\mathcal{P}_a$ (red). Here we solved those problems by means of the iterative algorithm proposed in \cite{Deori_etal_2016b}, but other decentralized algorithms could be employed, e.g., \cite{Gan_etal_2013}. Both solutions have the so called valley filling property, i.e., the PEV consumption tends to compensate for the over night drop in the non-PEV consumption. By comparison of the figure panels, as $m$ increases the consumption corresponding to the Nash equilibrium tends to the social optimum, as expected by the discussion below Remark 1.

\begin{figure}[t!]
\centering
\includegraphics[scale=0.51]{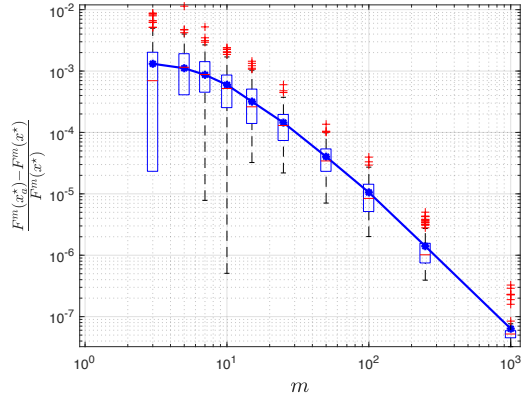}
\caption{Relative error $(F^m(x^\star_a) - F^m(x^\star))/F^m(x^\star)$; ``Blue stars'' correspond to the average value across 100 multi-extractions of $\{\gamma^i\}_{i \ge1}$ from a uniform distribution and $\underline{x}^{it} = 0$, $\overline{x}^{it} = 1$ for all $i \in I$, $t \in H$, while for each $m$ boxplots show the distribution of the relative error for the different parameter extractions. On each box, the ``red line'' indicates the median, and the bottom and top edges of the box indicate the 25th and 75th percentiles, respectively. The whiskers extend to the most extreme data points not considered outliers, and the outliers are plotted individually using the ``+'' symbol.}
\label{fig:error_limit}
\end{figure}

\begin{figure}[t!]
\centering
\subfloat[$m=5$]{\includegraphics[scale=0.54]{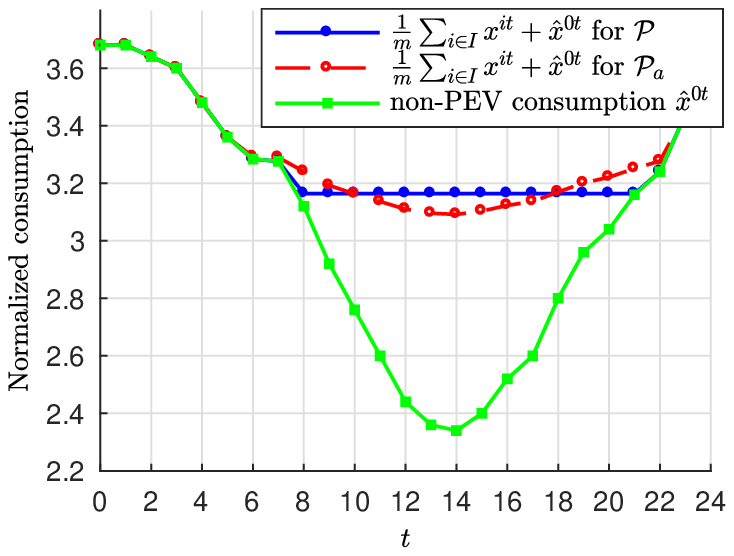}}
\subfloat[$m=100$]{\includegraphics[scale=0.54]{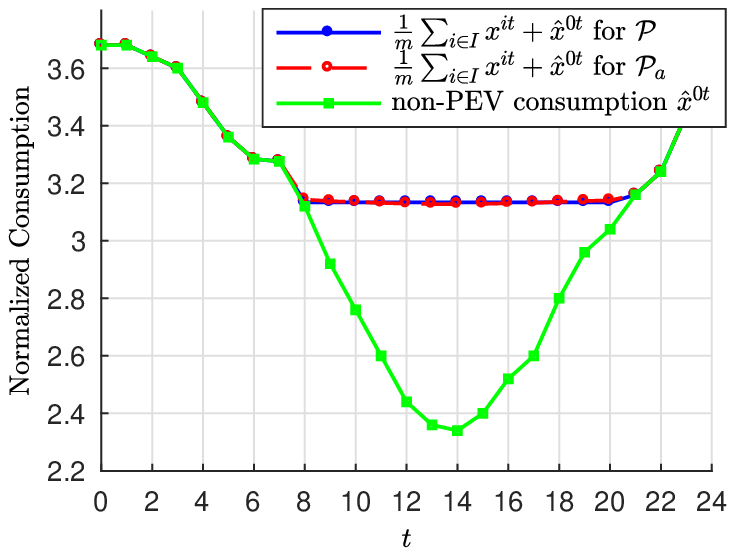}}
\caption{Normalized total consumption profile $(1/m) \sum_{i\in I}x^{it} +\hat{x}^{0t}$ obtained by solving $\mathcal{P}$ (blue) and $\mathcal{P}_a$ (red); Normalized non-PEV consumption $\hat{x}^{0t}$ shown in green. As $m$ increases these profiles tend to coincide.}
\label{fig:valleyFill}
\end{figure}

\section{Effect of heterogeneity} \label{sec:secIV}
Define the random vectors $\{\xi^i\}_{i \geq 1}=\{\gamma^i,\ \underline{x}^{i},\ \overline{x}^{i}\}_{i \geq 1}$.
For the results of this section we assume that $\{\xi^i\}_{i \geq 1}$ are extracted from a discrete probability distribution. \begin{assumption} \label{ass:gamma_discrete}
Let $\{\xi^i\}_{i \geq 1}=\{\gamma^i,\ \underline{x}^{i},\ \overline{x}^{i}\}_{i \geq 1}$ be an infinite sequence of positive, i.i.d. random variables on a \emph{discrete} probability space $(\Omega, \mathcal{F}, \mathbb{P})$. We assume that $\mathbb{P}$ is supported on $n_\xi$ masses located at $\bar{\xi}^\ell=[\gamma^\ell,\ {\underline{x}^\ell},\ {\overline{x}^\ell}]$, $\ell \in L$, where
$L = \{1, \ldots, n_\xi\}$, i.e., $\sum_{\ell \in L} \mathbb{P} \{ \xi = \bar{\xi}^\ell\} = 1$, for any $\xi \in \Omega$.
\end{assumption}

\subsection{Abstraction in homogeneous groups} \label{sec:secIVA}
In this subsection we focus on a finite number of agents and show that, either when solving $\mathcal{P}$ or $\mathcal{P}_a$, the decision vectors corresponding to agents that form a homogeneous group are identical, i.e., identical vehicles have the same charging profile. This naturally provides a way to abstract the overall problem, involving a possibly high number of agents and hence decision vectors, to a problem of smaller size where we only have one decision vector per group of homogeneous agents.

For any $m \geq 1$, for all $i \in I$, denote by $\sum_{i \in I} \mathds{1}_{\{\xi^i = \bar{\xi}^\ell\}}$ the number of agents that form a homogeneous group with parameter $\bar{\xi}^\ell$, where $\mathds{1}_{\{\xi^i = \bar{\xi}^\ell\}}$ is an indicator function that is 1 if $\xi^i = \bar{\xi}^\ell$ and 0 otherwise. For all $\ell \in L$, denote by $I^\ell = \{i \in I:~ \xi^i = \bar{\xi}^\ell \}$ the set of indices corresponding to agents belonging to the same homogeneous group. Note that for the single agent case (i.e., $m=1$) one of the sets $I^\ell$, $\ell \in L$, is singleton and all the others are empty. This implies that there is only one term in the square in $\bar{F}^m$ below.

Let $\bar{x}^\ell = [ \bar{x}^{\ell0}, \ldots, \bar{x}^{\ell(h-1)}]^\top \in \mathbb{R}^{|H|}$, $\ell \in L$, $\bar{x} = (\bar{x}^1,\ldots,$ $ \bar{x}^{n_\xi})$, $\bar{X} = X^1 \times \ldots \times \bar{X}^{n_\xi}$,
and consider the following variant of $\mathcal{P}$, where we only consider one decision vector per group of homogeneous agents.
\begin{align}
\bar{\mathcal{P}}:~ \min_{\bar{x} \in \bar{X}} \bar{F}^m(\bar{x}), \label{eq:opt_Pbar}
\end{align}
where
$\bar{F}^m(\bar{x}) = \sum_{t \in H} p^t ( \sum_{\ell \in L} \sum_{i \in I} \mathds{1}_{\{\xi^i = \bar{\xi}^\ell\}} \bar{x}^{\ell t} + x^{0t} )^2$, 
and for all $\ell \in L$,
\begin{align}
\bar{X}^\ell = \big \{ \bar{x}^\ell \in \mathbb{R}^{|H|}:~ & \sum_{t \in H} \bar{x}^{\ell t} = {\gamma}^\ell \text{ and } \nonumber \\
&\bar{x}^{\ell t} \in [\underline{x}^\ell, \overline{x}^\ell], \text{ for all } t \in H \big \}. \label{eq:agent_con_bar}
\end{align}
Let also $\bar{\mathcal{P}}_a$ denote the variant of $\mathcal{P}$, defined similarly to $\bar{\mathcal{P}}$ with the difference that its objective function is the sum of the objective function in \eqref{eq:opt_Pbar} and the term $\sum_{t \in H} p^t ( \bar x^{\ell t} )^2$.

\begin{proposition} \label{prop:group}
Consider Assumptions \ref{ass:feas}.a) and \ref{ass:gamma_discrete}. Let $\bar{x}^\star \in \bar{X}$, $\bar{x}^\star_a \in \bar{X}$ be any minimizer of $\bar{\mathcal{P}}$ and $\bar{\mathcal{P}}_a$, respectively. For all $\ell \in L$, let
\begin{align}
x^{i,\star} &= \bar{x}^{\ell,\star}, \text{ for all } i \in I^\ell, \label{eq:minP} \\
x^{i,\star}_a &= \bar{x}^{\ell,\star}_a, \text{ for all } i \in I^\ell, \label{eq:minPa}
\end{align}
Vectors $x^\star = (x^{1,\star}, \ldots, x^{m,\star})$ and $x^\star_a = (x^{1,\star}_a, \ldots,$ $x^{m,\star}_a)$ are minimizers of $\mathcal{P}$ and $\mathcal{P}_a$, respectively.
\end{proposition}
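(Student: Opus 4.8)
The plan is to transfer optimality between the full problems $\mathcal{P}$, $\mathcal{P}_a$ and their group-reduced counterparts $\bar{\mathcal{P}}$, $\bar{\mathcal{P}}_a$ via two maps: a \emph{replication} map that blows a per-group vector up to a per-agent vector, and an \emph{averaging} map that collapses a per-agent vector to its group means. The structural fact I would exploit throughout is that $F^m$ depends on the decision variables only through the population aggregate $\big(\sum_{i\in I}x^{it}\big)_{t\in H}$ --- indeed $F^m(x)=\sum_{t\in H}p^t(\sum_{i\in I}x^{it}+x^{0t})^2$ --- so any operation preserving this aggregate leaves $F^m$ unchanged, and only the auxiliary term of $\mathcal{P}_a$ is sensitive to how mass is distributed within a group.

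First I would record two elementary observations that make the maps well defined: (i) for $i\in I^\ell$ the sets $X^i$ and $\bar X^\ell$ coincide, since $\xi^i=\bar\xi^\ell$ means agent $i$ and group $\ell$ carry the same $\gamma$ and the same box $[\underline x,\overline x]$; in particular every $\bar X^\ell$ is nonempty, compact and convex under Assumption \ref{ass:feas}.a); and (ii) the sets $\{I^\ell\}_{\ell\in L}$ partition $I$. Then I would define the replication map $\phi:\bar X\to X$ by $(\phi(\bar x))^i=\bar x^\ell$ for $i\in I^\ell$, check $\phi(\bar x)\in X$ using (i), and compute that its aggregate equals $\sum_{\ell\in L}|I^\ell|\bar x^{\ell t}=\sum_{\ell\in L}\sum_{i\in I}\mathds{1}_{\{\xi^i=\bar\xi^\ell\}}\bar x^{\ell t}$; this gives $F^m(\phi(\bar x))=\bar F^m(\bar x)$ and $F^m_a(\phi(\bar x))=\sum_{\ell\in L}|I^\ell|\sum_{t\in H}p^t(\bar x^{\ell t})^2$, the latter being exactly the auxiliary term of $\bar{\mathcal{P}}_a$, so that the objective of $\bar{\mathcal{P}}_a$ at $\bar x$ equals $F^m(\phi(\bar x))+F^m_a(\phi(\bar x))$. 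Conversely I would define the averaging map $\psi:X\to\bar X$ by $(\psi(x))^\ell=\frac{1}{|I^\ell|}\sum_{i\in I^\ell}x^i$ when $I^\ell\neq\emptyset$ (and arbitrarily on $\bar X^\ell$ otherwise, which is harmless since empty groups do not enter any objective), note $\psi(x)\in\bar X$ because each component is a convex combination of points of $\bar X^\ell=X^i$, and observe that $\phi(\psi(x))$ has the same aggregate as $x$ --- hence $F^m(\phi(\psi(x)))=F^m(x)$ --- while $F^m_a(\phi(\psi(x)))\le F^m_a(x)$ by convexity of $s\mapsto s^2$ (Jensen, coordinatewise, with nonnegative weights $p^t$).

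With these maps in hand the conclusion is short. For $\mathcal{P}$: $x^\star:=\phi(\bar x^\star)$ is feasible with $F^m(x^\star)=\bar F^m(\bar x^\star)$, and any $x\in X$ satisfies $F^m(x)=\bar F^m(\psi(x))\ge\bar F^m(\bar x^\star)=F^m(x^\star)$ since $\psi(x)\in\bar X$, so $x^\star$ is a minimizer of $\mathcal{P}$, i.e.\ \eqref{eq:minP}. For $\mathcal{P}_a$: writing $G$ for the objective of $\bar{\mathcal{P}}_a$ and using $G(\bar x)=F^m(\phi(\bar x))+F^m_a(\phi(\bar x))$, the point $x^\star_a:=\phi(\bar x^\star_a)$ is feasible with value $G(\bar x^\star_a)$, while any $x\in X$ obeys $F^m(x)+F^m_a(x)\ge F^m(\phi(\psi(x)))+F^m_a(\phi(\psi(x)))=G(\psi(x))\ge G(\bar x^\star_a)=F^m(x^\star_a)+F^m_a(x^\star_a)$, giving \eqref{eq:minPa}.

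I expect the only delicate points to be bookkeeping ones: checking that empty homogeneous groups genuinely drop out of $\bar F^m$, $F^m$, $F^m_a$ and the auxiliary term (so that $\psi$ may be defined arbitrarily there), and fixing the correct reading of the loosely stated auxiliary term of $\bar{\mathcal{P}}_a$, namely $\sum_{\ell\in L}|I^\ell|\sum_{t\in H}p^t(\bar x^{\ell t})^2$, so that it coincides with $F^m_a(\phi(\bar x))$. The one substantive ingredient --- the reason identical vehicles must share a profile at the optimum of $\mathcal{P}_a$ (but not necessarily of $\mathcal{P}$) --- is precisely the Jensen gain on the quadratic regularizer, combined with the fact that $F^m$ sees only the aggregate.
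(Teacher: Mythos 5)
Your proposal is correct and follows essentially the same route as the paper's own proof: replicate the group-optimal profile across each homogeneous group (using $X^i=\bar X^\ell$ for $i\in I^\ell$ to get feasibility and equality of aggregates, hence of $F^m$), and compare against an arbitrary feasible point of $\mathcal{P}$ via the group-averaging map, with convexity/Jensen supplying the inequality needed for the auxiliary term of $\mathcal{P}_a$. Your explicit weighting $\sum_{\ell\in L}|I^\ell|\sum_{t\in H}p^t(\bar x^{\ell t})^2$ of the auxiliary term in $\bar{\mathcal{P}}_a$ is the correct reading and is in fact needed for the value correspondence with $\mathcal{P}_a$ to hold exactly.
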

\begin{proof}
For all $\ell \in L$, for all $i \in I^\ell$, $X^i = \bar{X}^\ell$. Therefore, since $\bar{x}^{\star}$ is optimal for $\bar{\mathcal{P}}$, it will be also feasible, i.e., $\bar{x}^{\ell,\star} \in \bar{X}^\ell$, for all $\ell \in L$. The last statement, together with \eqref{eq:minP}, leads to $x^{i,\star} \in X^i$, for all $i \in I$, which in turn implies that $x^\star$ is a feasible solution for $\mathcal{P}$. Via an analogous argument it can be shown that $x^\star_a$ is a feasible solution for $\mathcal{P}_a$.

By the definition of $\bar{F}^m$ we have that
\begin{align}
\bar{F}^m(\bar{x}^\star)
&= \sum_{t \in H} p^t \Big ( \sum_{\ell \in L} \sum_{i \in I} \mathds{1}_{\{\xi^i = \bar{\xi}^\ell\}} \bar{x}^{\ell t,\star} + x^{0t}\Big )^2 \nonumber \\
&= \sum_{t \in H} p^t \Big ( \sum_{\ell \in L} \sum_{i \in I^\ell} x^{it,\star} + x^{0t}\Big )^2 \nonumber \\
&= \sum_{t \in H} p^t \Big ( \sum_{i \in I} x^{it,\star} +x^{0t}\Big )^2 = F^m(x^\star), \label{eq:cost_equal}
\end{align}
where the third equality is due to
\eqref{eq:minP}, and the last one is due to \eqref{eq:agent_payoff}.
Let $x = (x^1,\ldots,x^m) \in X$ be an arbitrary feasible solution of $\mathcal{P}$, i.e., $x^i \in X^i$ for all $i \in I$, and consider
$\bar{x}^\ell = (1/n^\ell) \sum_{i \in I^\ell} x^i$, for all $\ell \in L$. For $\ell \in L$, since $\bar{x}^\ell$ is a convex combination of $\{x^i \in X^i \}_{i \in I^\ell}$, $X^i = \bar{X}^\ell$ for all $i \in I^\ell$ and $\bar{X}^\ell$ is convex, $\bar{x}^\ell \in \bar{X}^\ell$. Hence,
\begin{align}
\!\bar{F}^m(\bar{x}^\star) \!&\leq\! \bar{F}^m(\bar{x}) = \sum_{t \in H} p^t \Big ( \sum_{\ell \in L} \sum_{i \in I} \!\!\!\mathds{1}_{\{\xi^i = \bar{\xi}^\ell\}} \bar{x}^{\ell t}\! +\! x^{0t}\Big )^2 \nonumber \\
&\leq \sum_{t \in H} p^t \Big ( \sum_{\ell \in L} \sum_{i \in I^\ell} x^{it} + x^{0t}\Big )^2 \nonumber \\
&= \sum_{t \in H} p^t \Big ( \sum_{i \in I} x^{it} + x^{0t} \Big )^2 = F^m(x), \label{eq:proof_group1}
\end{align}
where the first inequality is due to optimality of $\bar{x}^\star$ for $\bar{\mathcal{P}}$, whereas the second one is due to
convexity of $\bar{F}^m$ and the fact that it is quadratic with respect to $\bar{x}$.
By \eqref{eq:cost_equal} and \eqref{eq:proof_group1}, we have that $F^m(x^\star) \leq F^m(x)$. Since $x \in X$ was arbitrary, $x^\star$ is optimal for $\mathcal{P}$.
To show that $x^\star_a$ is optimal for $\mathcal{P}_a$ we follow the same derivation with \eqref{eq:cost_equal} and \eqref{eq:proof_group1}, appending to $\bar{F}^m$ the term $\sum_{\ell \in L} \sum_{t \in H} p^t ( x^{\ell t} )^2$. \hfill $\Box$
\end{proof}

Proposition \ref{prop:group} implies that it suffices to solve $\bar{\mathcal{P}}$ (similarly for $\bar{\mathcal{P}}_a$), which involves fewer decision variables compared to $\mathcal{P}$, and then construct a minimizer of $\mathcal{P}$ by means of the assignment in \eqref{eq:minP}. Note that \eqref{eq:minP} and \eqref{eq:minPa} enforce the same decision vector to all members of a homogeneous group. It should be noted that the result of Proposition \ref{prop:group} is intuitive; as an effect of the price being agent independent, all agents in a homogeneous group solve exactly the same optimisation problem, thus resulting to the same Nash equilibrium charging strategy.

\subsection{Asymptotic effect of heterogeneity} \label{sec:secIVB}
Theorem \ref{thm:limit_Nash_opt} shows that the ratio between the optimal values of $\mathcal{P}$ and $\mathcal{P}_a$ tends to one as $m$ tends to infinity, for almost any $\{\xi^i\}_{i \geq 1}$, however, their individual values may change for different values of $\{\xi^i\}_{i \geq 1}$. For the case of a discrete probability distribution, we show in the following theorem that this is not the case and, as the number of agents tends to infinity, the optimal value of $\mathcal{P}$ (and hence the one of the associated game) tends to a deterministic quantity, i.e., variability averages out as the number of agents increases. For that particular subclass of problems and distributions, this result provides support to hypothesis $H_3'$ in \cite{Caines_etal_2007}.

\begin{theorem} \label{thm:heter}
Consider Assumptions \ref{ass:feas}.a) and \ref{ass:gamma_discrete}. For any $m \geq 1$, let $x^\star$, $\bar{x}^\star$ be any minimizer of $\mathcal{P}$ and $\bar{\mathcal{P}}$, respectively. We then have that
\begin{align}
\lim_{m \to \infty} &\frac{F^m(x^\star)}{m^2} \nonumber \\ &= \sum_{t \in H} p^t \Big ( \sum_{\ell \in L} \mathbb{P}\{\xi = \bar{\xi}^\ell\} \bar{x}^{\ell t, \star} + \hat{x}^{0t} \Big )^2,~~ (\text{$\mathbb{P}$-a.s.}) \label{eq:heter}
\end{align}
\end{theorem}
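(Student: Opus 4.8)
The plan is to use Proposition \ref{prop:group} to pass from the full $m$-agent problem $\mathcal{P}$ to the reduced group problem $\bar{\mathcal{P}}$, whose feasible set $\bar{X}$ does \emph{not} depend on $m$, and then to show that the suitably rescaled objective of $\bar{\mathcal{P}}$ converges, uniformly on the compact set $\bar{X}$, to the deterministic function appearing on the right-hand side of \eqref{eq:heter}. Concretely, by Proposition \ref{prop:group} and \eqref{eq:cost_equal}, for any minimizer $\bar{x}^\star$ of $\bar{\mathcal{P}}$ the vector $x^\star$ obtained via \eqref{eq:minP} minimizes $\mathcal{P}$ and satisfies $F^m(x^\star) = \bar{F}^m(\bar{x}^\star)$; since all minimizers of $\mathcal{P}$ (resp.\ $\bar{\mathcal{P}}$) attain the common optimal value, this identity holds for any pair of minimizers. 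Writing $n^\ell = \sum_{i\in I}\mathds{1}_{\{\xi^i=\bar{\xi}^\ell\}}$ for the size of the $\ell$-th homogeneous group and dividing by $m^2$,
\begin{align}
\frac{F^m(x^\star)}{m^2} = \sum_{t\in H} p^t \Big( \sum_{\ell\in L} \frac{n^\ell}{m}\, \bar{x}^{\ell t,\star} + \frac{x^{0t}}{m} \Big)^2 =: g_m(\bar{x}^\star). \nonumber
\end{align}

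Next I would apply Theorem \ref{thm:limit} with $y^i = \mathds{1}_{\{\xi^i=\bar{\xi}^\ell\}}$, which under Assumption \ref{ass:gamma_discrete} is an i.i.d.\ sequence with finite mean $\mathbb{P}\{\xi=\bar{\xi}^\ell\}$, to obtain $n^\ell/m \to \mathbb{P}\{\xi=\bar{\xi}^\ell\}$ ($\mathbb{P}$-a.s.) for each $\ell\in L$; since $L$ and $H$ are finite, these limits, together with $x^{0t}/m\to\hat{x}^{0t}$ from the standing assumption of Section \ref{sec:secIIA}, hold simultaneously on a single event of probability one. Define the deterministic function $g(\bar{x}) = \sum_{t\in H} p^t \big(\sum_{\ell\in L}\mathbb{P}\{\xi=\bar{\xi}^\ell\}\bar{x}^{\ell t} + \hat{x}^{0t}\big)^2$ on $\bar{X}$, so that the right-hand side of \eqref{eq:heter} is exactly $g(\bar{x}^\star)$. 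The crux is to show $\sup_{\bar{x}\in\bar{X}}|g_m(\bar{x}) - g(\bar{x})|\to 0$ on this event. This follows from compactness of $\bar{X}$ (each $\bar{X}^\ell$ is a nonempty closed bounded subset of $\mathbb{R}^{|H|}$ by Assumption \ref{ass:feas}.a)): setting $R = \max_{\ell\in L}\sup_{\bar{x}\in\bar{X}}\|\bar{x}^\ell\|_\infty<\infty$, for each $t$ the affine quantities $\sum_\ell (n^\ell/m)\bar{x}^{\ell t}+x^{0t}/m$ and $\sum_\ell \mathbb{P}\{\xi=\bar{\xi}^\ell\}\bar{x}^{\ell t}+\hat{x}^{0t}$ are uniformly bounded over $\bar{x}\in\bar{X}$ and all large $m$, and their difference is at most $R\sum_\ell|n^\ell/m-\mathbb{P}\{\xi=\bar{\xi}^\ell\}| + |x^{0t}/m-\hat{x}^{0t}| =: \varepsilon_m^t$, uniformly in $\bar{x}$; writing $a^2-b^2=(a-b)(a+b)$, squaring, and summing over the finite set $H$ with the bounded weights $p^t$ gives $|g_m(\bar{x})-g(\bar{x})|\le \delta_m$ for all $\bar{x}\in\bar{X}$, with $\delta_m\to 0$ since each $\varepsilon_m^t\to 0$.

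Finally, since $\bar{x}^\star\in\bar{X}$ for every $m$, I conclude $\big|F^m(x^\star)/m^2 - g(\bar{x}^\star)\big| = |g_m(\bar{x}^\star) - g(\bar{x}^\star)| \le \sup_{\bar{x}\in\bar{X}}|g_m(\bar{x})-g(\bar{x})| \to 0$ on the probability-one event above, which is precisely \eqref{eq:heter}. (As a byproduct, uniform convergence also yields $\min_{\bar{x}\in\bar{X}}g_m(\bar{x})\to\min_{\bar{x}\in\bar{X}}g(\bar{x})$, so that $F^m(x^\star)/m^2$ in fact converges to the deterministic constant $\min_{\bar{x}\in\bar{X}}g(\bar{x})$, matching the ``variability averages out'' interpretation.) I expect the only mildly delicate point to be the bookkeeping in this uniform-convergence estimate; in particular, the argument deliberately avoids any claim about convergence of the minimizers $\bar{x}^\star$ themselves — which may fail to be unique under Assumption \ref{ass:feas}.a) alone — and instead relies solely on uniform convergence of the objectives over the fixed compact set $\bar{X}$, after which evaluation at the point $\bar{x}^\star$ is immediate.
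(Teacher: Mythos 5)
Your proof is correct and follows the same skeleton as the paper's: reduce via Proposition \ref{prop:group} and \eqref{eq:cost_equal} to the group problem, apply Theorem \ref{thm:limit} to the indicators $\mathds{1}_{\{\xi^i=\bar{\xi}^\ell\}}$ to get $n^\ell/m\to\mathbb{P}\{\xi=\bar{\xi}^\ell\}$ ($\mathbb{P}$-a.s.), divide by $m^2$, and pass to the limit. The one genuine difference is that you interpose a uniform-convergence estimate of $g_m$ to $g$ over the compact set $\bar{X}$ before evaluating at $\bar{x}^\star$, whereas the paper simply takes the limit inside \eqref{eq:proof_heter2} term by term. Your extra step is not cosmetic: since $\bar{F}^m$ depends on the realized counts $n^\ell$ and on $x^{0t}$, the minimizer $\bar{x}^\star$ generally changes with $m$ (and need not be unique under Assumption \ref{ass:feas}.a) alone), so the right-hand side of \eqref{eq:heter} is itself $m$-dependent and the paper's limit interchange is not justified as written. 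Your argument buys exactly what is needed to repair this: $\sup_{\bar{x}\in\bar{X}}|g_m(\bar{x})-g(\bar{x})|\to 0$ gives both $|g_m(\bar{x}^\star)-g(\bar{x}^\star)|\to 0$ (the precise sense in which \eqref{eq:heter} holds) and, as you note, convergence of the optimal value $F^m(x^\star)/m^2$ to the deterministic constant $\min_{\bar{x}\in\bar{X}}g(\bar{x})$, which is the substantive claim of the theorem ("variability averages out"). In short: same route, but your version is the rigorous one.
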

\begin{proof}
For all $\ell \in L$, by Theorem \ref{thm:limit} with $\mathds{1}_{\{\xi^i = \bar{\xi}^\ell\}}$ in place of $y^i$, and since $\mathbb{E}[\mathds{1}_{\{\xi = \bar{\xi}^\ell\}}] = \mathbb{P}\{\xi = \bar{\xi}^\ell\}$, for all $\xi \in \Omega$,
\begin{align}\label{eq:proof_heter1}
\lim_{m \to \infty} \frac{1}{m} \sum_{i \in I} \mathds{1}_{\{\xi^i = \bar{\xi}^\ell\}} = \mathbb{P}\{\xi = \bar{\xi}^\ell\},~~ (\text{$\mathbb{P}$-a.s.})
\end{align}
By \eqref{eq:cost_equal} we have that $F^m(x^\star) = \bar{F}^m(\bar{x}^\star)$, while by the definition of $\bar{F}^m$ we obtain that
\begin{align}
\frac{F^m(x^\star)}{m^2} = \sum_{t \in H} p^t \Big ( \sum_{\ell \in L} \sum_{i \in I}\!\! \frac{1}{m} \mathds{1}_{\{\xi^i = \bar{\xi}^\ell\}} \bar{x}^{\ell t, \star} +\frac{x^{0t}}{m}\Big )^2\!, \label{eq:proof_heter2}
\end{align}
Since \eqref{eq:proof_heter2} holds for any $\{\xi^i\}_{i \in I}$, for any $m \geq 1$, \eqref{eq:proof_heter1}, \eqref{eq:proof_heter2}, and the fact that $\lim_{m \to \infty} x^{0t}/m = \hat{x}^{0t}$ (see Section \ref{sec:secIIA}), lead to \eqref{eq:heter}, and hence conclude the proof. \hfill $\Box$
\end{proof}

By Theorem \ref{thm:limit_Nash_opt} a similar statement holds for the optimal value of $\mathcal{P}_a$, as this tends to the one of $\mathcal{P}$ as the number of agents increases.
The implication of Theorem \ref{thm:heter} is illustrated in Figure \ref{fig:heter}.
We consider the same set-up with that of Figure \ref{fig:error_limit}, where $\hat{x}^{0t} = 0$ for all $t \in H$, with the difference that the charging levels $\{\gamma^i\}_{i \ge 1}$, $i \in I$, were extracted in an i.i.d. fashion from a discrete uniform distribution in $[0, 12]$, with masses centered uniformly in this interval with spacing $0.01$.
For different values of $m$, we provide the empirical probability distribution of $F^m(x^\star)/m^2$, where $x^\star$ is calculated by solving $\mathcal{P}$. As $m$ increases, the empirical distribution becomes concentrated at a single value of $F^m(x^\star)/m^2$, in agreement with Theorem \ref{thm:heter}.
\begin{figure}[t!]
\centering
\includegraphics[trim=1.9cm 6cm 2cm 7cm, clip=true,scale=0.42]{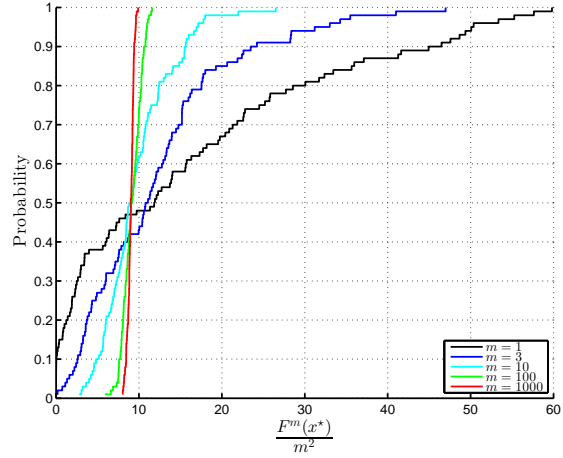}
\caption{Empirical distribution of $F^m(x^\star)/m^2$, constructed by calculating the optimal solution $x^\star$ of $\mathcal{P}$ for 100 multi-extractions of $\{\gamma^i\}_{i \ge1}$ from a discrete uniform distribution and $\underline x^{it}=0$ and $\overline x^{it}=1$ for all $i\in I$, $t\in H$. As $m$ increases the distribution gets concentrated around the quantity in \eqref{eq:heter}.}
\label{fig:heter}
\end{figure}

\section{Concluding remarks} \label{sec:secVI}
We quantified the price of anarchy for a class of PEV charging control games, showing that the limiting case of infinite agent populations the Nash equilibrium achieves the same value with the social welfare optimum for almost any choice of the random heterogeneity parameter. Moreover, in the case where the agents' heterogeneity parameters follow a discrete probability distribution, we provided a systematic way to abstract agents in homogeneous groups and showed that heterogeneity averages out as the number of agents tends to infinity.

Several iterative algorithms for decentralized computation of Nash equilibria could be employed, e.g.,  \cite{Gan_etal_2013,Paccagnan_etal_2016,Deori_etal_2016b}; in \cite{Deori_arxiv} a detailed analysis using the regularized Jacobi algorithm of \cite{Deori_etal_2016b} is provided.
Current work concentrates on relaxing the requirement for an affine price function to allow for a more general class of games like in \cite{Gan_etal_2013}, and on incorporating distribution network models and intertemporal charging costs in our formulation \cite{Caramanis_2012, Caramanis_IEEE}. Moreover, we aim at investigating the effect of heterogeneity in the case where the underlying probability distribution is continuous, while the result of Theorem \ref{thm:heter} could be exploited from a system aggregator's point of view to steer the aggregate value of large fleets of vehicles to a given deterministic quantity.

\section{Acknowledgements}
We would like to thank the anonymous reviewers, and in particular one of them for suggesting Remark 1.
\bibliographystyle{plain}
\bibliography{ref_decentralized_EV}

\vfill

\end{document}